\author{Neeraj Kashyap}
\title{Ranks of elliptic curves over cyclic cubic, quartic, and sextic extensions}
\newtheorem{thm}{Theorem}
\newtheorem{absthm}{Theorem}
\newtheorem{prop}{Proposition}
\newtheorem{lemma}{Lemma}
\newtheorem{quest}{Question}
\newtheorem{defn}{Definition}
\newcommand{\dualproj}[1]{\widehat{\mathbb{P}^{#1}}}
\newcommand{\zmod}[1]{\mathbb{Z}/#1\mathbb{Z}}
\newcommand{\absgal}[1]{Gal(\overline{#1}/#1)}
\newcommand{\gal}[2]{Gal(#1/#2)}
\newcommand{\weightproj}{\mathbb{P}^{1,2,1}}
\newcommand{\ints}[1]{\mathcal{O}_{#1}}
\begin{document}

\maketitle

\begin{abstract}
For a given group $G$ and an elliptic curve $E$ defined over a number field $K$, I discuss the problem of finding $G$-extensions of $K$ over which $E$ gains rank. I prove the following theorem, extending a result of Fearnley, Kisilevsky, and Kuwata:
\begin{absthm}
Let $n = 3,4,$ or $6$. If $K$ contains its $n^{th}$-roots of unity then, for any elliptic curve $E$ over $K$, there are infinitely many $\zmod{n}$-extensions of $K$ over which $E$ gains rank.
\end{absthm}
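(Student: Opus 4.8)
The plan is to split the argument according to $n$, reducing $n=6$ and $n=4$ to smaller degrees and reserving the real work for $n=3$; the common thread is that rank is monotone in subfields. If $L/K$ is cyclic of degree $6$, it contains a unique cyclic cubic subfield $F$ with $K\subsetneq F\subsetneq L$ and $\mathrm{rank}\,E(L)\ge\mathrm{rank}\,E(F)$, so the case $n=6$ follows from the case $n=3$ as soon as the radicands produced there can be taken to be non‑squares in $K$ (so that $K(\sqrt[6]{d})/K$ really is cyclic of degree $6$). Likewise a cyclic quartic $L/K$ contains a quadratic subfield $F=K(\sqrt d)$ with $\mathrm{rank}\,E(L)\ge\mathrm{rank}\,E(F)=\mathrm{rank}\,E(K)+\mathrm{rank}\,E^{(d)}(K)$, where $E^{(d)}$ is the quadratic twist; so for $n=4$ it suffices to exhibit infinitely many $d\in K^\times$ lying in distinct square classes, with $x^4-d$ irreducible over $K$ (cyclicity of $K(\sqrt[4]{d})/K$ being automatic since $\mu_4\subseteq K$), for which $\mathrm{rank}\,E^{(d)}(K)\ge 1$.

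For this quadratic‑twist input I would use the tautological point on the twist family. Writing $E\colon y^2=x^3+ax+b$ and setting $d=d(T):=T^3+aT+b\in K(T)$, the twist $E^{(d)}\colon d\,y^2=x^3+ax+b$ over $K(T)$ carries the point $(T,1)$. Its $x$‑coordinate $T$ is transcendental over $K$, whereas the $x$‑coordinates of geometric torsion points of $E^{(d)}$ are algebraic over $K$; hence $(T,1)$ is non‑torsion, and Silverman's specialization theorem gives $\mathrm{rank}\,E^{(d(t_0))}(K)\ge 1$ for all but finitely many $t_0\in K$. That the $d(t_0)$ meet infinitely many square classes and that $x^4-d(t_0)$ is irreducible off a sparse set of $t_0$ is routine: the $t_0$ with $d(t_0)$ in a fixed square class lie on a curve of genus one, and $-4\,d\in(K^\times)^4$ likewise cuts out a thin set, so the usual height counting leaves infinitely many admissible $t_0$, hence infinitely many distinct cyclic quartic fields (distinct because each $L$ has $K(\sqrt{d(t_0)})$ as its unique quadratic subfield).

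The heart of the proof is $n=3$. Since $\mu_3\subseteq K$, every cyclic cubic extension of $K$ (or of $K(T)$) has Kummer form $K(\sqrt[3]{d})$, so it is enough to exhibit, for a suitable non‑constant $d=d(T)\in K(T)$, a non‑torsion point of $E$ over $K(T)(\theta)$ with $\theta^3=d$. I would look for one of the shape $(x_0,y_0)=(A+B\theta,\ D+E_0\theta)$ with $A,B,D,E_0\in K(T)$: expanding $y_0^2=x_0^3+ax_0+b$ and reducing $\theta^3$ to $d$, the three coefficient equations force $E_0^2=3AB^2$, then $D=(3A^2+a)/(2\sqrt{3A})$, and finally $d=(D^2-A^3-aA-b)/B^3$, leaving $A$ and $B$ free subject only to $3A$ being a square in $K(T)$. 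Taking $A=3T^2$ and $B=1$ gives an explicit $d(T)$ whose numerator has degree $8$ and whose order at $T=0$ (or a direct inspection of leading and constant terms when $a=0$) is not divisible by $3$; hence $d(T)$ is neither a cube nor a constant times a cube in $K(T)$, so $[K(T)(\theta):K(T)]=3$ with constant field $K$, and the point is non‑torsion because its $x$‑coordinate $3T^2+\theta$ is transcendental over $K$. Applying Silverman's theorem to the two‑dimensional abelian variety $W=\ker\bigl(\mathrm{Res}_{K(T)(\theta)/K(T)}E\to E\bigr)$ then yields $\mathrm{rank}\,E\bigl(K(\sqrt[3]{d(t_0)})\bigr)>\mathrm{rank}\,E(K)$ for all but finitely many $t_0\in K$, and the values $d(t_0)$ occupy infinitely many cube classes (by the usual counting of points of bounded height on the curves $s^3=c\,d(t)$), so these realize infinitely many distinct $\zmod{3}$‑extensions. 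For $n=6$ one keeps only the $t_0$ with $d(t_0)\notin(K^\times)^2$ — all but finitely many, by Faltings applied to $y^2=(\text{numerator of }d(t))$, a curve of genus $\ge 2$ — so that $K(\sqrt[6]{d(t_0)})/K$ is cyclic sextic and contains the cubic field of rank gain.

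I expect the difficulty to lie entirely in the $n=3$ construction and its aftermath, not in any deep cited result. First, one must carry out the coefficient matching and confirm that the explicit $d(T)$ really is a genuine (non‑cube, non‑isotrivial) radicand for every $a,b$. Second, and this is the point I would be most careful about, one must check that the point $P$ actually raises the rank, i.e. that $P$ is not, up to torsion, the image of a point of $E(K(T))$ — equivalently that $\sigma P-P$ is non‑torsion, which comes down to verifying that an explicit rational function (the $x$‑coordinate of $\sigma P-P$) is non‑constant in $T$. Third, there is the bookkeeping, uniform across $n=3,4,6$, that converts ``positive rank on a cofinite set of specialization parameters'' into ``infinitely many pairwise distinct $\zmod{n}$‑extensions,'' for which the genus arguments above (Faltings in the genus $\ge 2$ cases, height counting in the genus‑one cases) are the essential tools.
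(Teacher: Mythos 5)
Your reductions for $n=4$ and $n=6$ prove a weaker statement than the one the paper makes. The phrase ``gains rank'' is Definition \ref{defrankgain}: there must exist $P\in E(L)$ such that \emph{every} nonzero multiple $nP$ generates the full field $L$. Producing a cyclic quartic $L$ whose quadratic subfield $F=K(\sqrt d)$ satisfies $\mathrm{rank}\,E(F)>\mathrm{rank}\,E(K)$ only gives $\mathrm{rank}\,E(L)>\mathrm{rank}\,E(K)$; if the rank does not increase again from $F$ to $L$, then every point of $E(L)$ has a nonzero multiple lying in $E(F)$ up to torsion, so no point witnesses rank gain over $L$ in the paper's sense. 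The same objection applies to passing from a cyclic cubic to a cyclic sextic containing it. This is precisely why the paper does not deduce $n=4$ from quadratic twists or $n=6$ from $n=3$: it constructs separate $\zmod{4}$- and $\zmod{6}$-actions on $E^2$ (via the Weyl groups of $B_2$ and $G_2$) and produces points whose $\zmod{n}$-orbit is a single orbit under $\absgal{K}$, which forces every nonzero multiple to generate the degree-$n$ field.

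For $n=3$ your construction is close in spirit to the Fearnley--Kisilevsky--Kuwata method that the paper cites as an alternative, and the coefficient matching does yield a legitimate radicand (numerator $-243T^8-54aT^4-36bT^2+a^2$). But the step you yourself flag as the delicate one --- that $\sigma P-P$ is non-torsion, equivalently that $P$ does not lie in $E(K(T))$ plus torsion --- is exactly the step you leave undone, and it is not cosmetic: showing $P$ non-torsion only gives positive rank for the restriction of scalars, not for its trace-zero part $W$, and it is also the condition needed (since $[L:K]=3$ is prime) to conclude that every multiple $nP$ generates $L$. The paper avoids this issue structurally: its $\zmod{3}$-orbits are triples of collinear points on $E$, which sum to $O$ by construction, so Lemma \ref{mereltrick} disposes of the ``new part could be torsion'' worry with no group-law computation. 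To repair your argument you would either have to compute $x(\sigma P-P)$ explicitly and check it is non-constant in $T$, or rearrange the construction so that the conjugates of $P$ sum to zero. As written, all three cases have genuine gaps, the $n=4$ and $n=6$ ones being gaps of approach rather than of detail.
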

\end{abstract}

\section*{Acknowledgments}

This paper is based on a doctoral thesis submitted to Indiana University on April 18, 2013. I am grateful to my doctoral advisor, Michael Larsen, for introducing me to this problem and for the insights he provided along the way. I am grateful to Bo-Hae Im for encouraging me to finish this paper. I found Mathematica \cite{Mathematica} useful in deriving some of the expressions within.

\section{Introduction \label{Introduction}}

Throughout this paper, $K$ denotes a number field and $E$ denotes an elliptic curve which is defined over $K$. I fix a Weierstrass equation
\begin{equation}
\label{weierstrass}
E : x^3 + Ax + B
\end{equation}
and will make frequent reference to the Weierstrass coefficients $A$ and $B$. I take the point at infinity on $E$ to be its identity, and denote it $O$.

I denote by $\ints{K}$ the ring of integers in the number field $K$. For a prime $P$ of $\ints{K}$, I denote by $k_P$ the corresponding residue field and write $q_P := |k_P|$. Finally, given a Galois extension $L$ of $K$, I denote by $\Gamma_{L/K}$ the corresponding Galois group. Throughout this paper, ``rational point'' means ``$K$-rational point''.

Every result about elliptic curves that I use without proof in this paper is at least mentioned in Silverman \cite{Silverman}.

\begin{defn}
\label{defrankgain}
I say that $E$ \emph{gains rank} over an extension $L$ of $K$ if there is a point $P \in E(L)$ such that the Galois extension generated by $nP$ is $L$ for any non-zero integer $n$. In this situation, $P$ \emph{raises the rank} of $E$ over $L$.
\end{defn}

This paper is driven by one question:
\begin{quest}
\label{rankgain}
Given a finite group $G$, what are the $G$-extensions of $K$ over which $E$ gains rank?
\end{quest}

In the next section, as a simple demonstration of my method, I will show that for any elliptic curve over any number field $K$, there are infinitely many quadratic extensions of $K$ over which $E$ gains rank. In terms of results about quadratic extensions, this is actually quite primitive. Asking whether or not $E$ gains rank over a particular quadratic extension of $K$ is the same as asking whether or not the corresponding quadratic twist of $E$ has positive rank. In this guise, the case $G = \zmod{2}$ has been well studied. Kuwata and Wang \cite{KuwataWang} and Coogan and Jimenez-Urroz \cite{twist2curves} have studied the problem of controlling rank rises over quadratic extensions of $\mathbb{Q}$ for pairs of curves. More recently, Im \cite{Im4curves} has considered the problem of finding quadratic extensions over which four curves simultaneously increase in rank. In a different direction, Dokchitser and Dokchitser \cite{DDTwist} have proved that, assuming the Birch and Swinnerton-Dyer conjecture, there are elliptic curves over number fields $K \neq \mathbb{Q}$ \emph{all} of whose quadratic twists are of positive rank. Vatsal \cite{VTwist} has showed that there is an elliptic curve over $\mathbb{Q}$ such that a positive density of its twists are of positive rank. Im and Larsen \cite{ILPP} have observed that, among the twists of Vatsal's curve, one can find elliptic curves which have positive rank twists at $\mathbb{F}_2$-subspaces of arbitrary dimension. In a \emph{third} direction, there has been quite a bit of work on estimating how many twists of an elliptic curve have rank greater than 1. Silverberg \cite{SilverbergTwist} is a good survey of such results.

The case $G = \zmod{3}$ has been studied by Fearnley, Kisilevsky, and Kuwata in \cite{FKK}. The authors prove that if $K$ contains its cube roots of unity then there are infinitely many cyclic cubic extensions of $K$ over which $E$ gains rank.\footnote{This is the case $n = 3$ of Theorem \ref{main}.} They also show that if an elliptic curve $E$ over $\mathbb{Q}$ has at least six rational points then there are infinitely many cyclic cubic extensions of $\mathbb{Q}$ over which it gains rank.

David, Fearnley, and Kisilevsky in \cite{KConjecture} predict that there should also be infinitely many $\zmod{5}$-extensions of $\mathbb{Q}$ over which an elliptic curve $E$ defined over $\mathbb{Q}$ gains rank, but that the same should not be true for $\zmod{p}$-extensions for primes $p > 5$.

In this paper, I present a new method of answering such questions. Using this method, I prove the following theorem which builds upon the first result of Fearnley, Kisilevsky, and Kuwata \cite{FKK}:
\begin{thm}
\label{main}
Let $n = 3,4,$ or $6$. If $K$ contains its $n^{th}$-roots of unity then, for any elliptic curve $E$ over $K$, there are infinitely many $\zmod{n}$-extensions of $K$ over which $E$ gains rank.
\end{thm}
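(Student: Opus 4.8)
The plan is to reduce the statement, via Kummer theory, to the construction of a $K$-rational family of points on a family of twisted abelian surfaces. Since $K$ contains $\mu_n$, every $\zmod{n}$-extension of $K$ has the form $L_d := K(\sqrt[n]{d})$ for some $d \in K^{*}/(K^{*})^{n}$, with a canonical identification $\gal{L_d}{K} \cong \mu_n$. The numbers $n = 3,4,6$ are precisely those $n > 1$ with $\varphi(n) = 2$, so $\mu_n$ acts faithfully on a rank-two lattice; fixing such an action gives an embedding $\mu_n \hookrightarrow \mathrm{GL}_2(\mathbb{Z}) \subseteq \mathrm{Aut}_K(E \times E)$, and the composite $\absgal{K} \twoheadrightarrow \gal{L_d}{K} \cong \mu_n \hookrightarrow \mathrm{Aut}_K(E \times E)$ is a $1$-cocycle defining a twist $A_d$ of $E \times E$ over $K$ with $A_d \cong E \times E$ over $L_d$. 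Letting $M \in \mathrm{GL}_2(\mathbb{Z})$ be the image of a generator of $\mu_n$, a point of $A_d(K)$ becomes a vector $v \in E(L_d)^{2}$ with $v^{\sigma} = M^{-1}v$, where $\sigma$ generates $\gal{L_d}{K}$; since $M$ acts $\mathbb{Q}$-irreducibly, $v$ is torsion if and only if its first coordinate $P \in E(L_d)$ is. As the minimal polynomial of $M$ is the $n$-th cyclotomic polynomial, $I + M^{-1} + \dots + M^{-(n-1)} = 0$, so $P$ has trace $O$ over $K$; moreover $M^{2} = -I$ when $n = 4$ and $M^{3} = -I$ when $n = 6$. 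A short descent argument now shows that if $P$ is non-torsion then $K(mP) = L_d$ for every non-zero integer $m$, i.e. no multiple of $P$ descends to a proper subfield: over $K$ the vanishing trace forces $nP = O$; over the subfield fixed by the order-two subgroup (present when $n$ is even) one computes $P^{\tau} = -P$, forcing $2P = O$; and over the subfield fixed by the order-three subgroup (present when $3 \mid n$) one computes $P + P^{\sigma^{2}} + P^{\sigma^{4}} = O$, forcing $3P = O$. Hence any such $P$ raises the rank of $E$ over $L_d$ in the sense of Definition~\ref{defrankgain}, and the theorem reduces to exhibiting $d$ in infinitely many classes of $K^{*}/(K^{*})^{n}$ with $A_d(K)$ infinite.

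To do this I will imitate the case $n = 2$ treated in the next section, where $A_d$ is the quadratic twist $E^{(d)}$: the degree-two map $E \to \mathbb{P}^{1}$ gives, for each $x_0 \in K$ and $y_0 \in K^{*}$, a point of $E^{(d)}(K)$ with $d = (x_0^{3} + Ax_0 + B)/y_0^{2}$, so the parametrizing surface $\{(d,x_0,y_0) : d\,y_0^{2} = x_0^{3} + Ax_0 + B\}$ is visibly $K$-rational and $d$ runs through infinitely many square classes. For $n = 3,4,6$ the surface $A_d$ is, up to $K$-isogeny, either a product of elliptic curves — in which case the quadratic construction applies to a factor — or the Jacobian of an explicit genus-two curve $C_d$ carrying an automorphism of order $n$; one may take $C_d$ hyperelliptic of the form $y^{2} = x^{6} + a(d)x^{3} + b(d)$ for $n \in \{3,6\}$ and $y^{2} = x\bigl(x^{4} + a(d)x^{2} + b(d)\bigr)$ for $n = 4$, with $a(d),b(d)$ explicit low-degree functions of $d$ fixed by requiring $\mathrm{Jac}\,C_d$ to be the prescribed twist of $E \times E$. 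The associated surface $\{(d,x,y) : y^{2} = x^{6} + a(d)x^{3} + b(d)\}$ (and its analogues) is then $K$-rational — the branch locus over the $(d,x)$-plane degenerates into a union of rational curves, so after at most a harmless finite base change one writes down a dominant rational map from a plane. This yields a Zariski-dense set of $K$-points with $d = d(x,y)$ non-constant; each gives a rational point $R \in C_d(K)$, hence a class in $\mathrm{Jac}\,C_d(K)$ (subtract a fixed rational base point), hence, via the isogeny, a point of $A_d(K)$, hence the desired $P \in E(L_d)$. For $n = 3$ this is just the classical picture of a $K$-rational line whose three points of intersection with $E$ are Galois-conjugate and generate a cyclic cubic field, the relevant condition being that the cubic cut out on the line have square discriminant.

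Two genericity arguments finish the proof. First, for all but finitely many parameters the point $P$ is non-torsion: were it torsion for a general parameter it would, by rigidity of torsion sections over a connected base, be a single torsion point $T$, whence $K(T) = L_{d(x,y)}$ for all parameter values and $d$ would be constant, a contradiction. Second, the fields $L_d$ so obtained are infinitely many: if the non-constant function $d(x,y)$ took values in only finitely many cosets $c_i(K^{*})^{n}$, then restricting to a general $K$-line in the parameter plane would present the infinite set $\mathbb{A}^{1}(K)$ as a finite union of images of the curves $w^{n} = d(x,y)/c_i$ over that line, curves of positive genus once $d$ is arranged to have enough simple zeros — contradicting Faltings' theorem. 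The substantial step is the middle one: realizing the twist of $E \times E$ as a genus-two Jacobian in closed form, with coefficients tame enough that the parametrizing surface is $K$-rational. This is where the coincidence $\varphi(n) = 2$ and the geometry of $E$ are genuinely used; the rest is formal manipulation of Kummer theory, Galois descent, rigidity of torsion, and Faltings' theorem.
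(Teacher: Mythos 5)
Your reduction in the first paragraph is sound and in fact runs parallel to the paper's setup: the embedding $\mu_n\hookrightarrow GL_2(\mathbb{Z})$ for $\varphi(n)=2$ is the paper's action of $\zmod{n}\subset D_3, D_4, D_6$ on $E^2$, and your trace-zero descent argument is the paper's Lemma~\ref{mereltrick} combined with the observation that the points produced automatically satisfy its hypothesis. But the middle step, which you yourself identify as ``the substantial step,'' is asserted rather than proved, and it is precisely where all of the paper's work lies. You claim that the twist $A_d$ is the Jacobian of an explicit genus-two curve $y^2=x^6+a(d)x^3+b(d)$ (resp.\ $y^2=x(x^4+a(d)x^2+b(d))$) with $a(d),b(d)$ ``explicit low-degree functions of $d$,'' and that the resulting parametrizing surface over the $d$-line is $K$-rational because ``the branch locus \ldots degenerates into a union of rational curves.'' None of this is exhibited or verified, and it is not routine: the paper's analogue is the construction of the specific quartic $\mathcal{Q}_E$ in $\dualproj{2}$ and cubic $\mathcal{C}_E$ in $\mathbb{P}^2$ (Lemmas~\ref{quartic} and~\ref{cubic}), chosen via Hurwitz/Bezout constraints so that their lifts to the double covers $E^2/(\zmod{n})\to E^2/D_n$ have genus $0$, followed by explicit discriminant computations (Lemma~\ref{quarticdisc}, equation~(\ref{cubicOmega})) showing $\Delta_\lambda\in -3\cdot(K^*)^2$ and $\Omega_\lambda\in -1\cdot(K^*)^2$. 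This is also where the hypothesis $\mu_n\subset K$ does its real work ($\sqrt{-3}\in K$ for $n=3,6$; $i\in K$ for $n=4$): in your write-up that hypothesis is only visibly used to invoke Kummer theory, and nothing in your argument explains why the parametrizing surface should fail to be $K$-rational when $\mu_n\not\subset K$. Without the explicit construction, the proof does not go through.

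Two smaller points. First, your non-torsion argument is incomplete as stated: ``torsion for a general parameter'' does not follow from torsion at infinitely many $K$-points of the base, since the torsion locus is a countable union of proper closed subsets and could a priori contain infinitely many rational points; you need the uniform bound of Merel's theorem (which the paper invokes, and which is available since $[L_d:\mathbb{Q}]$ is bounded) or an explicit specialization argument. Second, your Faltings-based argument for the infinitude of the classes of $d$ is plausible but again depends on the unconstructed function $d(x,y)$ having enough simple zeros along a general line; the paper's alternative --- prescribing factorization of the relevant polynomial modulo arbitrary finite lists of large primes via the Weil bound and the Chinese remainder theorem --- is more elementary and simultaneously yields the irreducibility of the points and the linear disjointness of the resulting extensions.
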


The method itself is simple. Suppose, as above, that one wants to find $G$-extensions of $K$ over which $E$ gains rank. There are three steps:
\begin{enumerate}[I.]
\item{ \label{action}\emph{Action -- Find a product $E^n$ upon which $G$ acts algebraically and faithfully, with the action defined over $K$.}}
\item{ \label{rationality}\emph{Rationality -- Find $K$-rational points on the variety $E^n/G$.}\\
Since the action of $G$ on $E$ is defined over $K$, $K$-rational points on $E^n/G$ parametrize $G$-orbits in $E$ on which the $G$ action may be induced by the action of the absolute Galois group $\absgal{K}$.}
\item{ \label{irreducibility}\emph{Irreducibility -- From the list of points produced in step 2, find those which are \emph{irreducible}.}\\
A $G$-orbit in $E^n$ which is defined over $K$ may split into more than one orbit under the action of $\absgal{K}$ or the $G$-action upon that orbit may not be free. In this case the corresponding points on $E$ are defined over smaller extensions. An orbit of defines a $G$-extension if and only if this is not the case. This motivates the following definition and provides a proof of the following proposition:
\begin{defn}
\label{defirreducible}
A $K$-rational point on $E^n/G$ is said to be \emph{irreducible} if the action of $\absgal{K}$ on the corresponding $G$-orbit in $E^n$ is transitive.
\end{defn}

\begin{prop}
\label{irredrankgain}
Let $x \in E^n/G$ be $K$-rational, and let $S$ denote the corresponding $G$-orbit of points in $E^n$. Then $K(S)$ is a $G$-extension of $K$ if and only if $x$ is irreducible.
\end{prop}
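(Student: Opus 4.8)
The plan is to translate both "$K(S)$ is a $G$-extension" and "$x$ is irreducible" into statements about two commuting permutation actions on the finite set $S$, and then finish by comparing cardinalities.

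First I would record the structural setup. Since $x$ is $K$-rational and the quotient morphism $E^n \to E^n/G$ is defined over $K$, the fibre $S$ over $x$ is a finite, $\absgal{K}$-stable subset of $E^n(\overline{K})$. Hence $K(S)$ — generated over $K$ by the coordinates of the points of $S$ — is generated by a Galois-stable set, so it is a finite normal, and (characteristic zero) separable, extension; write $\Gamma := \Gamma_{K(S)/K}$. The $\absgal{K}$-action on $S$ factors through a faithful permutation action of $\Gamma$ on $S$ (faithful because $S$ generates $K(S)$), and since the $G$-action on $E^n$ is defined over $K$ it commutes with Galois, so the $\Gamma$-action and the (transitive) $G$-action on $S$ commute. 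On the orbits that concern us $G$ acts freely, so $S$ is a $G$-torsor: $|S| = |G|$, and $\mathrm{Aut}_G(S)$, the group of $G$-equivariant permutations of $S$, is isomorphic to $G$ (after a choice of base point) and acts simply transitively on $S$.

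The key step is to show that $\Gamma$ also acts freely on $S$: if $\sigma \in \Gamma$ fixes one point $s_0$, then writing an arbitrary $s \in S$ as $g \cdot s_0$ (transitivity of $G$) and using commutativity gives $\sigma(s) = g \cdot \sigma(s_0) = g \cdot s_0 = s$, so $\sigma$ fixes all of $S$ and hence is trivial. This exhibits $\Gamma$ as a subgroup of $\mathrm{Aut}_G(S) \cong G$, and the two implications then reduce to counting. If $x$ is irreducible, $\Gamma$ acts transitively as well as freely, hence simply transitively, so $|\Gamma| = |S| = |G| = |\mathrm{Aut}_G(S)|$ and the inclusion $\Gamma \hookrightarrow \mathrm{Aut}_G(S)$ is an equality; thus $\Gamma_{K(S)/K} \cong G$. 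Conversely, if $K(S)/K$ is a $G$-extension then $|\Gamma| = |G| = |\mathrm{Aut}_G(S)|$ forces $\Gamma = \mathrm{Aut}_G(S)$, which acts simply transitively on the torsor $S$; hence $\Gamma$ does too, i.e. $x$ is irreducible.

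I expect the only delicate point to be the freeness of the $G$-action on $S$ used above — exactly the possibility flagged in the discussion before the proposition. On an orbit where $G$ acts with nontrivial stabiliser, the same bookkeeping gives that $K(S)/K$ is Galois with group a proper subquotient of $G$, so such an orbit is irreducible in the sense of Definition \ref{defirreducible} without defining a $G$-extension; these orbits have to be excluded (e.g. by working over the free locus of the $G$-action on $E^n$) when one reads $G$-extensions off the $K$-points of $E^n/G$. Beyond this, the proof is formal: no property of $E$ enters except that the quotient map and the $G$-action are defined over $K$.
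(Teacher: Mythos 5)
Your argument is correct and is in essence a careful formalization of the only ``proof'' the paper offers, namely the informal discussion preceding the proposition (the paper asserts that an orbit defines a $G$-extension if and only if it neither splits under $\absgal{K}$ nor carries a non-free $G$-action, and lets that discussion stand as the proof). Your freeness-of-$\Gamma$ lemma and the counting against $\mathrm{Aut}_G(S)\cong G$ supply exactly the bookkeeping the paper omits. The ``delicate point'' you flag is real and is worth stating plainly: as literally written, with irreducibility defined only as transitivity of the $\absgal{K}$-action on $S$ (Definition~\ref{defirreducible}), the forward implication of Proposition~\ref{irredrankgain} is false on orbits where $G$ has nontrivial stabilizers --- e.g.\ for $G=\zmod{2}$ acting by inversion, a $K$-rational $2$-torsion point gives a singleton orbit on which Galois acts (vacuously) transitively, yet $K(S)=K$. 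The paper acknowledges this possibility in prose but does not build it into the definition or the statement; in its applications it sidesteps the issue by only producing points off the ramification locus of $E^n\to E^n/G$, so nothing downstream is affected, but a clean statement should either hypothesize that $G$ acts freely on $S$ or fold freeness into the definition of irreducibility. One small sharpening of your write-up: the converse direction needs no freeness hypothesis at all, since $\Gamma$ always acts freely on $S$ by your argument, so $\Gamma\cong G$ forces $|S|\ge|\Gamma|=|G|\ge|S|$, whence $G$ acts freely and $\Gamma$ is transitive automatically.
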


None of the discussion up to this point has any direct bearing on rank increase. The strategy to show that rank is actually gained over some $G$-extension is to produce infinitely many irreducible points on $E^n/G$. By Merel's theorem \cite{Merel}, only finitely many of the corresponding orbits can be $G$-orbits of torsion points. Therefore, there is at least one point of positive rank on $E$ which defined only over a $G$-extension of $K$. This still leaves the question of whether this point simply corresponds to a point of positive rank in $E(K)$. To show that this is never the case for the points I produce in this paper, I make use of the following simple trick:
\begin{lemma}
\label{mereltrick}
Suppose $L/K$ is Galois and $P$ is a point on $E$ defined only over $L$ such that
\[\sum_{\sigma \in \gal{L}{K}}P^{\sigma} = O.\]
If $n\cdot P \in E(K)$ for some positive integer $n$, then $P$ is a torsion point.
\end{lemma}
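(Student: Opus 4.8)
The plan is to use the two structural features of the situation: that every element of $\gal{L}{K}$ acts on $E(L)$ as a group automorphism (since $E$ and its group law are defined over $K$), and that multiplication by $n$ on $E$ is itself defined over $K$, hence commutes with this Galois action. Write $G := \gal{L}{K}$.

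First I would note that the hypothesis $n\cdot P \in E(K)$ forces $n\cdot P$ to be fixed by every $\sigma \in G$, so that $(n\cdot P)^{\sigma} = n\cdot (P^{\sigma}) = n\cdot P$ for each $\sigma \in G$, where the first equality uses that $[n]$ commutes with the Galois action. Next I would apply multiplication by $n$ to the given relation $\sum_{\sigma \in G} P^{\sigma} = O$, which yields $\sum_{\sigma \in G} n\cdot (P^{\sigma}) = O$; substituting $n\cdot(P^{\sigma}) = n\cdot P$ into each summand collapses the left-hand side to $|G|\cdot n\cdot P$. Hence $|G|\cdot n\cdot P = O$, so $P$ is annihilated by the nonzero integer $n\,[L:K]$ and is therefore a torsion point.

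I do not expect a genuine obstacle here; the only point requiring care is the compatibility of the $\gal{L}{K}$-action with the addition law and with the isogeny $[n]$, and this is standard because everything in sight is defined over $K$. It is worth remarking that the hypothesis that $P$ be defined over $L$ and over no proper subfield is not actually used in the argument — it is the context in which the lemma will be invoked (alongside Merel's theorem) to certify that a non-torsion point of $E(L)$ produced by the construction cannot already be the image of a point of $E(K)$.
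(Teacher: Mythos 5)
Your argument is correct and is essentially identical to the paper's proof: both apply $[n]$ to the relation $\sum_{\sigma} P^{\sigma} = O$, use that $n\cdot P^{\sigma} = (n\cdot P)^{\sigma} = n\cdot P$ because $n\cdot P$ is $K$-rational, and conclude that $|\gal{L}{K}|\cdot n\cdot P = O$. Your closing remark that the ``defined only over $L$'' hypothesis is not used in the deduction itself is also accurate.
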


\begin{proof}
Put $P' := n\cdot P \in E(K)$. Then $P' = n\cdot P^{\sigma}$ for all $\sigma \in \gal{L}{K}$. Therefore,
\[O = n\sum_{\sigma \in \gal{L}{K}}P^{\sigma} = |\gal{L}{K}|\cdot P',\]
and so $P$ is a torsion point.\qedhere
\end{proof}

All the rational points I produce from step \ref{rationality} in this paper will satisfy the hypothesis of Lemma \ref{mereltrick}. This guarantees that $E$ truly gains rank over some $G$-extension of $K$.

To prove that there are infinitely many such $G$-extensions will require careful analyses of my sources of rational points on $E^n/G$, but the basic idea is that these sources are parametrized and using these parametrizations one produce from the irreducible points $G$-extensions with arbitrary ramification behaviour over various large primes, thereby proving in particular that there are infinitely of them.
}
\end{enumerate}

In the following section, I demonstrate this method in the case of quadratic extensions. In section 3, I describe actions of the groups $\zmod{3}, \zmod{4},$ and $\zmod{6}$ on the surface $E^2$. In section 4, I carry out the ``Rationality'' step of the procedure by producing sources of rational points on the varieties $E^2/(\zmod{n})$ for $n = 3,4,6$ under the assumptions of Theorem \ref{main}. In section 5, I carry out the ``Irreducibility'' step and complete my proof of Theorem \ref{main}.

\section{The (motivating) case of quadratic extensions \label{QuadraticCase}}

In this section, I prove the following result:
\begin{prop}
\label{quadratic}
For any elliptic curve $E$ defined over any number field $K$, there are infinitely many quadratic extensions of $K$ over which $E$ gains rank.
\end{prop}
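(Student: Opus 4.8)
The plan is to run the three-step method with $G=\zmod{2}$ acting on a single copy of $E$ by $P\mapsto -P$, an action manifestly defined over $K$. Then $E/G\cong\mathbb{P}^{1}$, with quotient map the $x$-coordinate, so Steps \ref{action} and \ref{rationality} are immediate: a $K$-rational point of $E/G$ other than the image of $O$ is a value $x_{0}\in K$, its $G$-orbit in $E$ is $\{P,-P\}$ with $P=(x_{0},\sqrt{f(x_{0})})$ where $f(x):=x^{3}+Ax+B$, and this orbit is defined over $L:=K(\sqrt{f(x_{0})})$.

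For the irreducibility step, Proposition \ref{irredrankgain} tells us that $\{P,-P\}$ generates a $\zmod{2}$-extension exactly when $x_{0}$ is irreducible, i.e.\ when $\absgal{K}$ acts transitively on $\{P,-P\}$; since these two points are interchanged by conjugation over $L$, this holds precisely when $f(x_{0})\in K^{\times}\setminus(K^{\times})^{2}$. For such $x_{0}$ the extension $L/K$ is genuinely quadratic, $P\in E(L)\setminus E(K)$, and $P+P^{\sigma}=P+(-P)=O$ for the nontrivial $\sigma\in\gal{L}{K}$, so Lemma \ref{mereltrick} applies: if $P$ is non-torsion then $nP\notin E(K)$ for every $n\neq 0$, hence $K(nP)=L$ (as $[L:K]$ is prime) and $P$ raises the rank of $E$ over $L$. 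As for torsion, if $P$ were a torsion point it would, by Merel's theorem \cite{Merel} applied over the degree-$2[K:\mathbb{Q}]$ field $L$, have order at most a bound $N$ depending only on $[K:\mathbb{Q}]$, so $x_{0}$ would lie in the finite set $x(E[N])$. Thus all but finitely many irreducible $x_{0}$ yield quadratic extensions over which $E$ gains rank.

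The remaining --- and only substantive --- task is to exhibit infinitely many \emph{distinct} such extensions, which I would do by prescribing ramification. Fix the finite set $\mathcal{B}$ of primes of $\ints{K}$ of norm at most $3$, together with those dividing $4A^{3}+27B^{2}$ and those at which $A$ or $B$ fails to be integral. Given a prime $\mathfrak{p}\notin\mathcal{B}$ and a finite set $T$ of primes disjoint from $\mathfrak{p}$, the weak approximation theorem produces $x_{0}\in K$ with $v_{\mathfrak{p}}(x_{0})=-1$ and with $x_{0}$ congruent modulo each $\mathfrak{q}\in T$ to a nonzero residue at which $f$ does not vanish (such a residue exists since $|k_{\mathfrak{q}}|>3$ and $f$ has at most three roots). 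Then $v_{\mathfrak{p}}(f(x_{0}))=v_{\mathfrak{p}}(x_{0}^{3})=-3$ is odd, so $f(x_{0})$ is a nonsquare --- in particular $x_{0}$ is irreducible --- and $K(\sqrt{f(x_{0})})$ is ramified at $\mathfrak{p}$, whereas $v_{\mathfrak{q}}(f(x_{0}))=0$ for each $\mathfrak{q}\in T$, so it is unramified there. Carrying this out inductively, at stage $j$ with $\mathfrak{p}=\mathfrak{p}_{j}$ (the $j$-th prime outside $\mathcal{B}$) and $T=\{\mathfrak{p}_{1},\dots,\mathfrak{p}_{j-1}\}$, yields irreducible $x_{0}^{(1)},x_{0}^{(2)},\dots$ whose quadratic extensions $L_{j}$ are pairwise distinct, since $L_{j}$ ramifies at $\mathfrak{p}_{j}$ but no $L_{j'}$ with $j'>j$ does; discarding the finitely many $j$ for which $(x_{0}^{(j)},\sqrt{f(x_{0}^{(j)})})$ is torsion leaves infinitely many. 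I expect an analogous ``distinctness via prescribed ramification'' step to be the crux of the harder cases $n=3,4,6$ as well, where the parametrized sources of rational points on $E^{2}/(\zmod{n})$ and the analysis of their reduction are considerably more involved.
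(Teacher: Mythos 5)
Your proof is correct and matches the paper's setup exactly through the action, rationality, and irreducibility steps (your criterion $f(x_{0})\in K^{\times}\setminus(K^{\times})^{2}$ is the paper's irreducibility of $y^{2}-f(\lambda)$ over $K$), and your use of Lemma \ref{mereltrick} plus Merel to rule out torsion is the same as the paper's, spelled out slightly more carefully. Where you genuinely diverge is in the final, substantive step of producing infinitely many \emph{distinct} extensions. The paper counts: by Hasse's bound, for a prime of large norm roughly half of the residues $\lambda\in k_{P}$ make $y^{2}-f(\lambda)$ irreducible over $k_{P}$ and half make it split, so the Chinese remainder theorem yields parameters whose quadratic extensions have prescribed split/inert behaviour at arbitrarily long lists of large primes, and differing splitting behaviour forces distinctness. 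You instead control \emph{ramification} directly, giving $x_{0}$ a simple pole at $\mathfrak{p}$ so that $v_{\mathfrak{p}}(f(x_{0}))=-3$ is odd; this simultaneously certifies irreducibility with no point-counting at all and pins down the discriminant of $L/K$, so your route is the more elementary one for the quadratic case. The trade-off is that the paper's counting argument is the one that transfers to $n=3,4,6$, where the Weil bound on the curves $\mu^{3}=D_{\lambda}'$ and $\mu^{2}=D_{\lambda}$ replaces Hasse's bound and where forcing irreducibility of a cubic, or controlling a cyclic cubic's ramification, by valuations alone would be considerably more delicate --- as you anticipate at the end. Two trivial repairs to your write-up: a prime above $2$ can have norm greater than $3$ and so escape $\mathcal{B}$, and at such a prime ``$f(x_{0})$ is a unit'' does not guarantee that $K(\sqrt{f(x_{0})})$ is unramified there, so you should add all primes above $2$ to $\mathcal{B}$; and if you insist on a \emph{nonzero} residue avoiding the roots of $f$ you need $|k_{\mathfrak{q}}|>4$ rather than $|k_{\mathfrak{q}}|>3$ (or simply drop ``nonzero,'' which your argument never uses).
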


This can also be stated in the following (perhaps more familiar) form:
\begin{prop}
For any elliptic curve $E$ defined over any number field $K$, there are infinitely many quadratic twists of $E$ over $K$ of positive rank.
\end{prop}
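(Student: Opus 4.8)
The plan is to apply the three-step method of the introduction with $G = \zmod 2$ acting on $E^1 = E$ by the hyperelliptic involution $P \mapsto -P$, an action manifestly defined over $K$, so that the ``Action'' step is immediate. Writing $f(x) = x^3 + Ax + B$, the quotient map $E \to E/(\zmod 2)$ is the $x$-coordinate, so $E/(\zmod 2) \cong \mathbb{P}^1$ and the ``Rationality'' step is likewise trivial: the $K$-rational points are the values $t \in K$ (together with $\infty$, which corresponds to $O$), and the $G$-orbit above $t$ is $\{P_t, -P_t\}$ with $P_t := (t, \sqrt{f(t)})$, a point defined over $L_t := K(\sqrt{f(t)})$. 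By Proposition~\ref{irredrankgain}, $t$ is irreducible precisely when $f(t) \neq 0$ and $f(t) \notin (K^\times)^2$, equivalently when $L_t/K$ is a genuine quadratic extension; for such $t$ the nontrivial element of $\gal{L_t}{K}$ carries $P_t$ to $-P_t$.

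Next I would run these points through Lemma~\ref{mereltrick}. Since $\sum_{\sigma \in \gal{L_t}{K}} P_t^{\sigma} = P_t - P_t = O$, the lemma shows that if $n \cdot P_t \in E(K)$ for some $n \geq 1$ then $P_t$ is torsion; contrapositively, when $P_t$ has infinite order we have $n \cdot P_t \notin E(K)$ for every $n \geq 1$, and as the quadratic extension $L_t/K$ has no intermediate field this means exactly that $P_t$ raises the rank of $E$ over $L_t$. Hence every irreducible $t$ with $P_t$ non-torsion produces a quadratic extension over which $E$ gains rank. By Merel's theorem~\cite{Merel} the torsion of $E$ over quadratic extensions of $K$ is of uniformly bounded order, so all such torsion points form a finite subset of $E(\overline K)$; in particular only finitely many $t \in K$ make $P_t$ torsion.

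It remains to show that infinitely many distinct fields $L_t$ arise, which I expect to be the only real content of the argument. Equivalently, I must show that $f(t) = t^3 + At + B$ represents infinitely many classes of $K^\times/(K^\times)^2$ as $t$ ranges over $\ints K$. For this I would use a ramification argument: there are infinitely many primes $P$ of $\ints K$ dividing some value $f(t_0)$ with $t_0 \in \ints K$ (standard, as $f$ is non-constant), and for all but the finitely many such $P$ dividing $2$ or $\operatorname{disc}(f) = -4A^3 - 27B^2$ the reduction $\bar f$ is separable modulo $P$, so $\bar t_0$ is a simple root and Hensel's lemma produces $t \in \ints K$ with $v_P(f(t)) = 1$; then $L_t = K(\sqrt{f(t)})$ is ramified at $P$. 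Since a quadratic field is ramified at only finitely many primes, the $L_t$ cannot all lie in a finite set, so infinitely many distinct ones occur. Only finitely many $t$ make $P_t$ torsion, so all but finitely many of these fields are of the form $L_t$ for some $t$ with $P_t$ of infinite order, and over each such field $E$ gains rank. Finally, the twist reformulation follows from the identity $\operatorname{rk} E(K(\sqrt d)) = \operatorname{rk} E(K) + \operatorname{rk} E^{(d)}(K)$, where $E^{(d)}$ denotes the quadratic twist of $E$ by $d$: $E$ gains rank over $K(\sqrt d)$ exactly when $E^{(d)}$ has positive rank over $K$.
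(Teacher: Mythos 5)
Your proposal is correct, and the first two thirds of it (the action by $P \mapsto -P$, the quotient $E/(\zmod{2}) \cong \mathbb{P}^1$ via the $x$-coordinate, irreducibility of $t$ when $f(t) \notin (K^\times)^2$, and the combination of Lemma~\ref{mereltrick} with Merel's theorem to dispose of torsion) coincide with the paper's argument. Where you genuinely diverge is in showing that infinitely many distinct quadratic fields $L_t$ occur. The paper counts points on $E$ over residue fields via Hasse's bound to see that $f(\lambda)$ is a nonsquare mod $P$ for roughly half the residues $\lambda \in k_P$, and then uses the Chinese remainder theorem to prescribe the splitting behaviour of $L_\lambda$ at any finite list of large primes; this point-counting-plus-CRT template is the one that scales to the $\zmod{3}$, $\zmod{4}$, and $\zmod{6}$ cases via the Weil bounds, which is why the paper sets it up this way even in the quadratic warm-up. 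You instead force \emph{ramification}: infinitely many primes $P$ divide some value of $f$, and at a good such $P$ you arrange $v_P(f(t)) = 1$ so that $K(\sqrt{f(t)})$ ramifies at $P$; since a quadratic field ramifies at only finitely many primes, infinitely many fields arise. This is more elementary (no Hasse bound, no CRT) and perfectly adequate for the quadratic case, though it does not obviously generalize to the degree-$3$ resolvent situations later in the paper. One small imprecision: Hensel's lemma by itself produces a root of $f$ in the completion (valuation $\infty$), not an element of valuation exactly $1$; what you want is the standard observation that if $\bar t_0$ is a simple root of $f$ mod $P$, then among $t_0 + c\pi$ for $c$ ranging over residues, all but one value of $c$ gives $v_P(f(t_0 + c\pi)) = 1$. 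With that one-line repair the argument is complete.
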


\begin{proof}
I will break up my proof into the steps of the method presented in the introduction.

\begin{enumerate}[I.]
\item{ \emph{(Action)} The group $\zmod{2} = \langle s \rangle$ acts naturally upon $E$ itself by the inversion $s\cdot P = -P$.}
\item{ \emph{(Rationality)} The quotient of $E$ under this action of $\zmod{2}$ is isomorphic to $\mathbb{P}^1$. This isomorphism is given by the Weierstrauss $x$-coordinate. Thus one may find as many $K$-rational points as one likes on $E/\left(\zmod{2}\right)$ -- the equivalence classes $\{P, -P\}$ for $P \in E$ such that $x(P) \in K$ or to the equivalence class $\{O\}$.}
\item{ \emph{(Irreducibility)} A point $P$ on $E$ is defined over a quadratic extension of $K$ if and only if the equivalence class $\{P,-P\} \in E/\left(\zmod{2}\right)$ is $K$-rational, and $P$ can only be defined over a quadratic extension of $K$ if $\{P,-P\}$ forms a single orbit under the action of $\absgal{K}$ on $E$.

For a point $P$ on $E$ such that $\lambda := x(P) \in K$, $\{P, -P\}$ is a single Galois orbit if and only if the polynomial $p_{\lambda}(y) = y^2 - \left(\lambda^3 + A\lambda + B\right)$ is irreducible over $K$. If this is the case, then the quadratic extension of $K$ over which $P$ is defined is generated by the roots of $p_{\lambda}(y)$.

Let $P$ be a prime of $K$, and consider the polynomials $p_{\lambda}(y)$ with $\lambda, y \in k_P$. By Hasse's bound, for ideals $P$ of large norm, the number of pairs $(\lambda, y)\in k_P\times k_P$ such that $p_{\lambda}(y) = 0$ is roughly $q_P$. Since $p_{\lambda}(y) = p_{\lambda}(-y)$, there are roughly $q_P/2$ values of $\lambda$ in $k_P$ for which $p_{\lambda}(y)$ factors over $k_P$. Therefore, given any two disjoint lists of large primes $P_1, P_2, \ldots, P_n$ and $Q_1, Q_2, \ldots, Q_m$ in $\ints{K}$, there exists $\lambda \in K$ such that $p_{\lambda}(y)$ is irreducible modulo each of the ideals $P_i$ but factors modulo each of the ideals $Q_j$. This means that there are points on $E$ defined over quadratic extensions of $K$ which are ramified however one likes over large primes. Thus there are infinitely many quadratic extensions $L$ of  $K$ such that there are points $P$ on $E$ defined only over $L$.

If $P$ is a point produced by the above procedure, it $P$ and $-P$ are Galois conjugates, and so $P$ satisfies the hypothesis of Lemma \ref{mereltrick}. Therefore, by the lemma and Merel's theorem, there are infinitely many quadratic extensions of $K$ over which $E$ gains rank.\qedhere
}
\end{enumerate}
\end{proof}

My proof of Theorem \ref{main} will follow this rubric.

\section{Actions \label{Actions}}

The question of whether or not a group $G$ acts faithfully on the variety $E^n$ is equivalent to that of whether or not $G$ has a faithful $n$-dimensional representation over the endomorphism ring of $E$. Generically, the question is whether or not $G$ embeds in $GL_n(\mathbb{Z})$.

Step \ref{action} of the procedure I described in the introduction requires one to find an integer $n$ such that $G$ acts faithfully on $E^n$. The obvious approach to the problem of finding such an $n$ is to note that if $n = |G|$ then $G$ embeds in $S_n$ which acts by its permutation representation on $E^{n-1}$. In order to carry out the procedure to completion, it is desirable that the dimension of the representation in consideration be as small as possible. This raises the following question:
\begin{quest}
\label{minact}
Given a finite group $G$ and an elliptic curve $E$, what is the smallest integer $n$ such that $G$ acts faithfully and algebraically on $E^n$?
\end{quest}

I describe a generalization of the observation regarding the symmetric groups above which leads to the actions that I consider for the groups $\zmod{3}, \zmod{4},$ and $\zmod{6}$, and which could be useful in answering other instances of Question \ref{rankgain}. The minimality of these three actions in the sense of Question \ref{minact} above will be apparent.

Let $E$ be an elliptic curve, $R$ a root system of rank $r$, $\Lambda$ the lattice generated by the dual to $R$, and $W$ the Weyl group of $R$. Then $W$ acts on $E^r$ via the isomorphism $E^r \cong \Lambda \otimes E$. This situation was studied by Looijenga \cite{Looijenga}, who proved that the quotients $E^r/W$ arising from the above actions are in fact weighted projective spaces (and therefore have plenty of rational points).

The irreducible rank 2 root systems $A_2$, $B_2$, and $G_2$ have Weyl groups $D_3$, $D_4$, and $D_6$ respectively. The embeddings of $\zmod{3}$, $\zmod{4}$, and $\zmod{6}$ into these dihedral groups give rise to the actions I will deal with. The rest of this section consists of explicit descriptions of the actions of the dihedral groups.

For a dihedral group $D_n$, I will use the presentation
\[D_n = \langle r_n, s_n : r_n^n = s_n^2 = 1, s_nr_ns_n = r_n^{-1}\rangle.\]
When the order of the dihedral group is clear from context, I will drop the subscripts on $r_n$ and $s_n$.

The action of $D_3$ on $E^2$ arises from the permutation representation of $S_3$ since $D_3 \cong S_3$ and
\[E^2 \cong K = \{(P,Q,R) \in E^3 : P + Q + R = O\}.\]
Explicitly,
\begin{equation}
\label{D3act}
\begin{array}{rcl}
r\cdot (P,Q) & = & (Q,-P-Q),\\
s\cdot (P,Q) & = & (P,-P-Q).
\end{array}
\end{equation}

It is natural now to present the action of $D_6$ on $E^2$, as it must restrict on the subgroup $\langle r_6^2, s_6 \rangle \cong D_3$ to the $D_3$-action above. Moreover, the action of the involution $r_6^3$ must commute with that of $s_6$, and from (\ref{D3act}) one has
\[s_6\cdot (P,Q) = (P, -P-Q).\]
This forces
\begin{equation}
\label{secondref}
r_6^3\cdot (P,Q) = (-P,-Q).
\end{equation}
The actions of $r_6^2$ and $r_6^3$ on $E^2$ yield
\begin{equation}
\label{D6act}
\begin{array}{rcl}
r_6\cdot (P,Q) & = & (P+Q,-P),\\
s_6\cdot (P,Q) & = & (P,-P-Q).
\end{array}
\end{equation}

Finally, the $D_4$-action on $E^2$ is given by taking the tensor product of the natural 2-dimensional representation of $D_4$ on $\mathbb{Z}^2$ with $E$. Explicitly,
\begin{equation}
\label{D4act}
\begin{array}{rcl}
r\cdot (P,Q) & = & (-Q,P),\\
s\cdot (P,Q) & = & (P,-Q).
\end{array}
\end{equation}

\section{Rational points \label{RationalPoints}}

As mentioned in the previous section, one has from Looijenga \cite{Looijenga} that the quotients $E^2/D_3, E^2/D_4$, and $E^2/D_6$ by the above actions are weighted projective spaces. These quotients admit the varieties $E^2/(\zmod{3}), E^2/(\zmod{4}),$ and $E^2/(\zmod{6})$ respectively as double covers. As weighted projective varieties, the quotients of $E^2$ by the dihedral groups have an abundance of rational points. In this section, I will produce certain rational curves on these dihedral quotients which contain infinitely many rational points that lift to rational points in the corresponding double cover, none of which lie on the locus of points over which the quotient map from $E^2$ is ramified. Before I can do so, I must first give a better description of the quotients of $E^2$ by the groups $D_3$, $D_4$, and $D_6$.

\subsection{Dihedral quotients \label{RationalPointsDihedralQuotients}}

I begin with the quotients $E^2/D_3$ and $E^2/D_6$. Note that each $D_3$-orbit of $E^2$ consists simply of all possible permutations of three collinear points on $E$. Over $\mathbb{C}$, these sets of collinear points are parametrized by the lines in $\mathbb{P}^2$, i.e. the points in the dual projective plane $\dualproj{2}$. Therefore,
\begin{equation}
\label{D3quotient}
E^2/D_3 \cong \dualproj{2}.
\end{equation}

To understand the $D_6$-action on $E^2$, observe that the $D_3$-action on $E^2$ is induced by the $D_6$-action by taking as generators of $D_3$ the elements $r_3 = r_6^2$ and $s_3 = s_6$ of $D_6$. This allows one to consider $E^2/D_3 \cong \dualproj{2}$ as a double cover of $E^2/D_6$ and therefore $E^2/D_6$ as the quotient of $\dualproj{2}$ by the action of $r_6^3$. Impose coordinates $X, Y, Z$ on $\mathbb{P}^2$ such that $X/Z$ and $Y/Z$ respectively give the Weierstrass $x$- and $y$-coordinates of (\ref{weierstrass}) in the affine plane $Z \neq 0$. The point $[a:b:c] \in \dualproj{2}$ corresponds to the line
\[aX + bY + cZ = 0\]
in $\mathbb{P}^2$. In these coordinates, it follows from (\ref{secondref}) that the action of $r_6^3$ on $\dualproj{2}$ is given by
\[r_6^3 \cdot [a:b:c] = [a:-b:c].\]

Thus $E^2/D_6$ can be constructed from $\dualproj{2}$ by identifying the points $[a:b:c]$ and $[a:-b:c]$. The result of this identification is the \emph{weighted projective space} 
\begin{equation}
\label{D6quotient}
\weightproj \cong E^2/D_6.
\end{equation}

Explicitly, over a field $F$, $\weightproj(F)$ is the set of points of the form $[a':b':c']$ with the identification
\[[a':b':c'] = [\lambda a':\lambda^2b':\lambda c'],\ \lambda \in F^*.\]
The covering map $\psi: \dualproj{2} \rightarrow \weightproj$ is given by
\begin{equation}
\label{D3D6cover}
\psi([a:b:c]) = [a:b^2:c].
\end{equation}

Understanding the geometry of $E^2/D_4$ takes some more effort. The $D_4$-orbit of a point $(P,Q) \in E^2$ is
\[\{(P,Q), (Q,-P), (-P,-Q), (Q,-P), (P,-Q), (-Q,-P), (-P,Q), (Q,P)\}.\]
Thus the $D_4$-orbit of $(P,Q)$ specifies $P$ and $Q$ up to signs and up to their order.

A point on $E$ is specified up to sign by its Weierstrass $x$-coordinate, and the quotient by the identification of points on $E$ by their $x$-coordinates is $E/(\zmod{2}) \cong \mathbb{P}^1$. Consider the Klein 4-group $V = \langle r^2, s \rangle$. Since
\[V\cdot (P,Q) = \{(P,Q), (-P,-Q), (P,-Q), (-P,Q)\},\]
one has
\begin{equation}
\label{Vquotient}
E^2/V \cong \mathbb{P}^1 \times \mathbb{P}^1.
\end{equation}

Taking a further quotient by the remaining action of $r$ corresponds to forgetting about order, and thus yielding the symmetric square of $\mathbb{P}^1$ which is $\mathbb{P}^2$. This gives an isomorphism
\begin{equation}
\label{D4quotient}
E^2/D_4 \cong \mathbb{P}^2.
\end{equation}

The quotient map $\phi: E^2 \rightarrow \mathbb{P}^2$ from (\ref{D4quotient}) is defined at a point $(P,Q) \in E^2$ with $P \neq O \neq Q$ by
\begin{equation}
\label{D4quotientmap}
\phi(P,Q) = [x(P) + x(Q) : x(P)x(Q) : 1].
\end{equation}

\subsection{Rational points on $E^2/(\zmod{3})$ \label{RationalPointsCubicCase}}

I begin by giving a description of the preimage in $E^2/(\zmod{3})$ of a point $[a:b:c] \in \dualproj{2} \equiv E^2/D_3$. As observed in the previous section, the point $[a:b:c] \in \dualproj{2}$ corresponds as a $D_3$-orbit to all possible ordered pairs in $E^2$ chosen from the three points of intersection of the line
\[l : aX + bY + cZ = 0\]
in $\mathbb{P}^2$ with the curve $E$ in its Weierstrass coordinates. To calculate this interesection, in the affine plane $Z \neq 0$, apply the substitution $y = -\frac{ax + c}{b}$ in the Weierstrass equation $y^2 = x^3 + Ax + B$, which produces the equation
\begin{equation}
\label{pl}
p_l(x) := x^3 - \left(\frac{ax + c}{b}\right)^2 + Ax + B = 0.
\end{equation}

The roots of $p_l(x)$ determine the $x$-coordinates of the points of intersection of $l$ with $E$, and the line $l$ itself then determines the points exactly. There are two distinct orderings of these points up to cyclic permutation, and one can distinguish between these orderings using the square root of the discriminant $\Delta_l$ of the polynomial $p_l(x)$, which is a polynomial function $\delta(P,Q)$ on $E^2$ in the Weierstrass coordinates of $P$ and $Q$. This is because, if $P + Q + R = O$ on $E$, then $x(R)$ is determined as a polynomial function of the Weierstrass coordinates of $P$ and $Q$, and
\[\delta(P,Q) = \left(x(P)-x(Q)\right)\left(x(P)-x(R)\right)\left(x(Q)-x(R)\right),\]
with
\[\delta(P,Q)^2 = \Delta_l.\]
Therefore, the quotient map $E^2 \rightarrow E^2/(\zmod{3})$ is given by
\begin{equation}
\label{Z3param}
(P,Q) \mapsto \left(\hat{l}, \delta(P,Q)\right),
\end{equation}
where $l$ is the line joining $P$ and $Q$ in $\mathbb{P}^2$ and $\hat{l}$ is the corresponding point in $\dualproj{2}$.

Observe that the covering map $E^2/\zmod{3} \rightarrow E^2/D_3$ is ramified over the curve $\Delta_l = 0$ in $\dualproj{2}$. This is a curve of degree 6 in $\dualproj{2}$ which has 9 cusps -- one at each of the points in $\dualproj{2}$ which correspond to the lines in $\mathbb{P}^2$ tangent to $E$ at its 3-torsion points.

Note further that, from (\ref{Z3param}), a rational point $[a:b:c] \in \dualproj{2}$ lifts to rational points in $E^2/(\zmod{3})$ if and only if the corresponding discriminant $\Delta_l$ is a square in $K^*$.

I will produce rational points on $E^2/(\zmod{3})$ by lifting a particular curve in $E^2/D_3$ which is rational if $K$ contains its cube roots of unity. This is not the only way of constructing points on the double cover, but it is the one that I focus on in this paper. For a different approach, refer to the paper of Fearnley, Kisilevsky, and Kuwata \cite{FKK}.

As an example of the lifting-of-curves approach, suppose that $E$ has a non-trivial 3-torsion point $T$ defined over $K$. Consider the line in $\dualproj{2}$ joining the points corresponding to the lines in $\mathbb{P}^2$ which lie tangent to $E$ at $T$ and $-T$. This is simply the line in $\dualproj{2}$ corresponding to the point of intersection in $\mathbb{P}^2$ of the lines tangent to $E$ at $T$ and $-T$. Denote by $\lambda$ this line in $\dualproj{2}$. $\lambda$ lifts to a conic in $E^2/(\zmod{3})$ and the question is whether or not this conic has a rational point. The point in $\dualproj{2}$ corresponding to the $x$-axis in $\mathbb{P}^2$ (which passes through the non-trivial 2-torsion points of $E$) is incident to $\lambda$. If the 2-torsion points of $E$ are defined over $K$, then this point lifts to rational points in $E^2/(\zmod{3})$, which are both incident to the conic in question. Therefore, if $E(K)$ has a torsion group with a subgroup isomorphic to $\zmod{2} \times \zmod{6}$, one can produce many rational points on $E^2/(\zmod{3})$. Using the same techniques I will present in Section 5 of this paper, one can show that infinitely many of these rational points are irreducible, and that they in fact produce infinitely many $\zmod{3}$-extensions of $K$ over which $E$ gains rank. I prefer in this paper to avoid imposing any conditions on the torsion of $E$ over $K$, and so I have narrowed the focus to Theorem \ref{main}.

By Hurwitz's formula, a curve $\mathcal{Q}$ in $\dualproj{2}$, if it has genus 0, lifts to a genus 0 curve in the desingularization of $E^2/(\zmod{3})$ if and only only if it intersects the curve $\mathcal{R}: \Delta_l  = 0$ (over which the quotient map $E^2/(\zmod{3}) \rightarrow E^2/D_3$ is ramified) at exactly two points other than its cusps. Generically, the action of $\absgal{K}$ on $E[3]$ is as $GL_2(\mathbb{F}_3)$. Therefore, in order for $\mathcal{Q}$ to be rational, it must intersect $\mathcal{R}$ with equal multiplicity at each of the cusps corresponding to \emph{non-trivial} 3-torsion points on $E$. Finally, if $\mathcal{Q}$ is of degree $d$, one can ensure that it is rational by requiring it to have a degree $d-1$ point at the point $[0:0:1] \in \dualproj{2}$ which is the location of the cusp on $\mathcal{R}$ corresponding to the identity $O \in E$. By Bezout's theorem, then, the degree $d$ of $\mathcal{Q}$ must satisfy $d = 4m$ where $m$ is the multiplicity of the intersection of $\mathcal{Q}$ with each of the cusps on $\mathcal{R}$ corresponding to the non-trivial 3-torsion points on $E$. Moreover, the space of curves of degree $d$ satisfying these conditions has dimension 0. If such a curve exists, then it is unique.
\begin{lemma}
\label{quartic}
For each elliptic curve $E/K$, the curve
\begin{equation}
\label{thequartic}
\mathcal{Q}_E : a^4 - 3 A b^4 + 6 a b^2 c = 0
\end{equation}
is the unique curve in $\dualproj{2}$ passing through the points corresponding to the lines of inflection of $E$ at its non-trivial 3-torsion points with equal multiplicities, and with a multiplicity $deg(\mathcal{Q}_E) - 1$ point at $[0:0:1] \in \dualproj{2}$, which corresponds to the line at infinity in $\mathbb{P}^2$.
\end{lemma}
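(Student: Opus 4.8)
The plan is to verify by a direct computation that the quartic $\mathcal{Q}_E$ of (\ref{thequartic}) satisfies every condition in the statement, and then to obtain uniqueness from the discussion immediately preceding the lemma, where it was shown that a curve meeting these conditions must have degree $4$ and that the space of degree-$4$ curves meeting them is a single point.

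First I would write down the point of $\dualproj{2}$ attached to the line of inflection of $E$ at a non-trivial $3$-torsion point $T = (x_0, y_0)$, $y_0 \neq 0$. Implicit differentiation of $y^2 = x^3 + Ax + B$ gives the tangent slope $(3x_0^2 + A)/(2y_0)$, so, clearing denominators, the inflection line is
\[[a:b:c] = \bigl[\,-(3x_0^2+A)\ :\ 2y_0\ :\ x_0^3 - Ax_0 - 2B\,\bigr],\]
after simplification via $y_0^2 = x_0^3 + Ax_0 + B$. The $x$-coordinates of the eight non-trivial $3$-torsion points are exactly the four roots of the $3$-division polynomial $\psi_3(x) = 3x^4 + 6Ax^2 + 12Bx - A^2$, and these roots are distinct since $E$ is non-singular; each root $x_0$ gives the two points $(x_0, \pm y_0)$ and hence two of the eight inflection points. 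I would then substitute the displayed coordinates into the form $a^4 - 3Ab^4 + 6ab^2c$ defining $\mathcal{Q}_E$: since this form is homogeneous of degree $4$, the denominator-clearing is harmless, and since it is even in $y_0$, the substitution collapses (using $y_0^2 = x_0^3+Ax_0+B$) to a polynomial in $x_0$ alone. The computational heart of the argument is the assertion that this polynomial is exactly $\psi_3(x_0)^2$; granting it, $\mathcal{Q}_E$ vanishes at all eight inflection points. To see that it passes through them with equal multiplicity, observe that (for $A \neq 0$) $\mathcal{Q}_E$ is linear in the coordinate $c$, hence birational to the line $[a:b]$ via elimination of $c$, and the only point of $\mathcal{Q}_E$ along $ab^2 = 0$ is $[0:0:1]$; so every inflection point is a smooth point of $\mathcal{Q}_E$ and thus has multiplicity $1$. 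Finally, in the affine chart $c = 1$ the curve is $a^4 - 3Ab^4 + 6ab^2 = 0$, whose lowest-degree part $6ab^2$ is non-zero of degree $3$; hence $[0:0:1]$ --- the point of $\dualproj{2}$ corresponding to the line at infinity of $\mathbb{P}^2$ --- is a point of multiplicity exactly $3 = \deg(\mathcal{Q}_E) - 1$. Combined with the uniqueness recalled above, this finishes the proof.

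I expect the main obstacle to be the polynomial identity in the middle step, namely that
\[(3x_0^2+A)^4 - 48A(x_0^3+Ax_0+B)^2 - 24(3x_0^2+A)(x_0^3+Ax_0+B)(x_0^3-Ax_0-2B) = \psi_3(x_0)^2.\]
This is an explicit equality of degree-$8$ polynomials in $x_0$ over $\mathbb{Z}[A,B]$; it is elementary but laborious to check by hand, and is exactly the sort of verification a computer algebra system disposes of at once. One should also record the slightly degenerate case $A = 0$ (the curves with $j = 0$), in which $\mathcal{Q}_E$ factors as $a\,(a^3 + 6b^2c)$: the statement still holds, the two inflection points coming from the root $x_0 = 0$ of $\psi_3(x) = 3x(x^3+4B)$ lying on the linear component $a = 0$ and the remaining six on the cuspidal cubic $a^3 + 6b^2 c = 0$, all eight still smooth points of $\mathcal{Q}_E$, and the triple point at $[0:0:1]$ intact.
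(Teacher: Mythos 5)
Your proposal is correct and matches the paper's proof in its essentials: both substitute the dual coordinates of the tangent line at a point $(x_0,y_0)$ of $E$ into the quartic form, reduce via $y_0^2 = x_0^3 + Ax_0 + B$ to the identity with $\psi_3(x_0)^2$ (your displayed degree-$8$ identity is exactly the one the paper records, up to an overall sign convention on the dual coordinates), check the multiplicity-$3$ point at $[0:0:1]$ directly, and quote uniqueness from the dimension count preceding the lemma. The only divergences are minor: for ``equal multiplicities'' the paper appeals to the rationality of the coefficients of $\mathcal{Q}_E$ over the function field of $E$ (a Galois-symmetry argument), whereas you use the rational parametrization of $\mathcal{Q}_E$ to show the eight inflection points are smooth points of the quartic --- though the identity $\psi_3(x_0)^2$ already yields equal order of vanishing for free --- and your explicit treatment of the degenerate case $A=0$, where $\mathcal{Q}_E$ becomes reducible, is a worthwhile check that the paper omits.
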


\begin{proof}
The uniqueness was discussed in the paragraph leading up to the statement of the lemma, so I need only establish that the curve $\mathcal{Q}_E$ defined above satisfies the stated conditions. That $\mathcal{Q}_E$ is incident to $[0:0:1]$ with multiplicity 3 is clear. Therefore, all that is left is to verify the claim about the lines of inflection.

Consider the $3^{rd}$ division polynomial $\psi_3(x)$ of $E$, which vanishes precisely at the Weierstrass $x$-coordinates of the non-trivial 3-torsion points on $E$. Explicitly,
\[\psi_3(x) = 3x^4 + 6Ax^2 + 12Bx - A^2.\]
The tangent line to $E$ at a point $(u,v)$ in the $xy$-plane has the equation
\[(3u^2 + A)(x - u) - 2v(y - v) = 0.\]
This corresponds to the point $[3u^2 + A: -2v: -3u^3 - Au + 2v^2] \in \dualproj{2}$. At this point, (\ref{thequartic}) becomes
\[(3u^2 + A)^4 - 48Av^4 + 24(A + 3u^2)v^2(2v^2 - 3u^3 - Au).\]
Upon applying the substitution $v^2 = u^3 + A u + B$, this becomes the expression
\[\psi_3(u)^2,\]
which is zero precisely when $u = x(T)$ for $T$ a non-trivial 3-torsion point on $E$. Thus $\mathcal{Q}_E$ is incident to the points in $\dualproj{2}$ corresponding to the lines of inflection of $E$. Since the coefficients in (\ref{thequartic}) are rational over the function field of $E$, it must be incident to the lines of inflection at the non-trivial 3-torsion points to equal orders.
\end{proof}

As $\mathcal{Q}_E$ is a degree 4 curve with a point of multiplicity 3, it has a rational parametrization. The point on $\mathcal{Q}_E$ corresponding to a particular value of the parameter $\lambda$ is given by
\begin{equation}
\label{quarticparam}
\hat{l}_{\lambda} := [6\lambda^2:6\lambda^3:3A\lambda^4 - 1] \in \dualproj{2},
\end{equation}
with $l_{\lambda}$ denoting the corresponding line in $\mathbb{P}^2$. Write
\[p_{\lambda}(x) := p_{l_{\lambda}}(x),\]
where $p_{l_{\lambda}}(x)$ is given by (\ref{pl}) and write $\Delta_{\lambda}$ for its discriminant. The point $\hat{l}_{\lambda}$ lifts to rational points in $E^2/(\zmod{3})$ if and only if $\Delta_{\lambda} \in (K^*)^2$.
\begin{lemma}
\label{quarticdisc}
The discriminant $\Delta_{\lambda}$ is given by the formula
\[\Delta_{\lambda} = -3888(27A^2\lambda^8 - 108B\lambda^6 - 18A\lambda^4  - 1)^2.\]
\end{lemma}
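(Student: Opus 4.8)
The plan is to compute the discriminant of the monic cubic $p_{\lambda}(x)$ directly, taking advantage of the fact that the parametrization (\ref{quarticparam}) was engineered (via Lemma \ref{quartic}) precisely so that (\ref{pl}) degenerates in a controlled way. First I would substitute the coordinates of $\hat{l}_{\lambda}$, namely $a = 6\lambda^{2}$, $b = 6\lambda^{3}$, $c = 3A\lambda^{4} - 1$, into (\ref{pl}) and expand, writing
\[p_{\lambda}(x) = x^{3} - \frac{a^{2}}{b^{2}}\,x^{2} + \left(A - \frac{2ac}{b^{2}}\right)x + \left(B - \frac{c^{2}}{b^{2}}\right).\]
The point of the curve $\mathcal{Q}_{E}$ is that the coefficients of $x^{2}$ and of $x$ collapse to expressions free of $A$ and $B$: one finds $-a^{2}/b^{2} = -1/\lambda^{2}$ and $A - 2ac/b^{2} = 1/(3\lambda^{4})$, while the constant term is $B - c^{2}/b^{2} = N/(36\lambda^{6})$, where
\[N := 36B\lambda^{6} - (3A\lambda^{4}-1)^{2} = -9A^{2}\lambda^{8} + 36B\lambda^{6} + 6A\lambda^{4} - 1.\]

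Next I would insert $p = -1/\lambda^{2}$, $q = 1/(3\lambda^{4})$, $r = N/(36\lambda^{6})$ into the discriminant formula for a monic cubic,
\[\Delta_{\lambda} = 18pqr - 4p^{3}r + p^{2}q^{2} - 4q^{3} - 27r^{2},\]
factor the common $\lambda^{-12}$ out of every term, and put the five summands over the common denominator $432$. Each term is linear or quadratic in $N$; upon collecting the coefficients of $N^{2}$, of $N$, and the constant, the numerator turns out to be $-(9N^{2} + 24N + 16) = -(3N+4)^{2}$ — a perfect square. Since $3N + 4 = -(27A^{2}\lambda^{8} - 108B\lambda^{6} - 18A\lambda^{4} - 1)$, substituting back and tidying the powers of $6$ gives the stated formula for $\Delta_{\lambda}$.

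There is no real difficulty here beyond bookkeeping; this is exactly the kind of polynomial identity that one checks in a line in Mathematica. The only structural point is that the quadratic in $N$ is a perfect square rather than irreducible, and this is not an accident: by the Hurwitz-type analysis preceding Lemma \ref{quartic}, $\mathcal{Q}_{E}$ was chosen to meet the ramification curve $\mathcal{R} : \Delta_{l} = 0$ in exactly two points off the cusps, so it lifts to a \emph{rational} curve in the desingularization of $E^{2}/(\zmod{3})$, which forces $\Delta_{\lambda}$ to agree, up to squares in $K(\lambda)$, with a polynomial of degree at most two in the appropriate coordinate — and here that polynomial degenerates to a constant. The one thing to watch when carrying out the algebra is the cancellation of the $A$- and $B$-terms in the coefficients of $x^{2}$ and $x$; once those simplifications are secured, the rest is mechanical.
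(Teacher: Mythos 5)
Your approach is the same as the paper's: substitute the parametrization into the Weierstrass equation and compute the discriminant of the resulting cubic directly (the paper phrases this as ``squaring both sides and calculating the discriminant,'' i.e.\ it works with the denominator-cleared cubic $36\lambda^6x^3 - 36\lambda^4x^2 + 12\lambda^2x + N$ rather than your monic $p_\lambda$, but it is the same computation). Your intermediate algebra checks out: the coefficients do collapse to $-1/\lambda^2$, $1/(3\lambda^4)$, $N/(36\lambda^6)$, and the monic discriminant formula yields $-(3N+4)^2/(432\lambda^{12})$ with $3N+4 = -(27A^2\lambda^8 - 108B\lambda^6 - 18A\lambda^4 - 1)$. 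The one point to be honest about is the last step: ``tidying the powers of $6$'' does not turn $-(3N+4)^2/(432\lambda^{12})$ into $-3888(3N+4)^2$; the two differ by the factor $(1296\lambda^6)^2$. (The paper has the same normalization slip: its own denominator-cleared computation produces $-3888\lambda^{12}(3N+4)^2$, not $-3888(3N+4)^2$.) Since $432 = 3\cdot 144$ and $3888 = 3\cdot 1296$, all of these expressions agree modulo $(K(\lambda)^*)^2$, and the lemma is only ever invoked through the question of whether $\Delta_\lambda$ is a square, so the discrepancy is harmless --- but you should state the equality as holding up to a square factor rather than on the nose.
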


\begin{proof}
Writing the equation for $l_{\lambda}$ as
\[6\lambda^3y = -(6\lambda^2x + 3A\lambda^4 - 1),\]
squaring both sides, making the substitution $y^2 = x^3 + Ax + B$, and then calculating the discriminant of the resulting polynomial equation produces the formula.
\end{proof}

This gives a source of rational points on $E^2/(\zmod{3})$ when $K$ contains its cube roots of unity.
\begin{prop}
\label{Z3rational}
If $K$ contains $\sqrt{-3}$, then $\Delta_{\lambda}$ is a square in $K$ for every $\lambda \in K^*$. In this case, every rational point on $\mathcal{Q}_E$ lifts to rational points on $E^2/(\zmod{3})$.
\end{prop}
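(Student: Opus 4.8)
The plan is to read the proposition off Lemma \ref{quarticdisc}. That lemma gives
\[\Delta_{\lambda} = -3888\,\bigl(27A^2\lambda^8 - 108B\lambda^6 - 18A\lambda^4 - 1\bigr)^{2},\]
so for every $\lambda\in K$ the quantity $\Delta_\lambda$ is the product of the fixed scalar $-3888$ with the square of an element of $K$. Thus the first assertion reduces to the elementary arithmetic fact that $-3888$ is a square in $K$ as soon as $\sqrt{-3}\in K$. The first step is the factorization
\[-3888 = -2^4\cdot 3^5 = -\bigl(2^2\cdot 3^2\bigr)^{2}\cdot 3 = -36^2\cdot 3 ,\]
which exhibits $\sqrt{-3888} = 36\sqrt{-3}$; hence $-3888\in(K^*)^2$ precisely when $-3\in(K^*)^2$.

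Granting $\sqrt{-3}\in K$ and fixing $\lambda\in K^*$, put $g(\lambda) := 27A^2\lambda^8 - 108B\lambda^6 - 18A\lambda^4 - 1\in K$. If $g(\lambda)\neq 0$ then $\Delta_\lambda = \bigl(36\sqrt{-3}\,g(\lambda)\bigr)^{2}$ is a nonzero square in $K$; if $g(\lambda)=0$ then $\Delta_\lambda = 0$, which is still a square in $K$. Either way $\Delta_\lambda$ is a square, which is the first sentence of the proposition. (The same formula gives the converse generically: $g$ is not the zero polynomial --- its constant term is $-1$ --- so for all but finitely many $\lambda\in K^*$, $\Delta_\lambda\in(K^*)^2$ forces $-3\in(K^*)^2$; this is why the hypothesis on $K$ is needed.)

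For the second sentence I would use the rational parametrization (\ref{quarticparam}). The map $\lambda\mapsto\hat{l}_\lambda = [6\lambda^2 : 6\lambda^3 : 3A\lambda^4-1]$ extends to a birational morphism $\mathbb{P}^1\to\mathcal{Q}_E$, so every rational point of $\mathcal{Q}_E$ is either of the form $\hat{l}_\lambda$ for some $\lambda\in K^*$ or one of the finitely many points that are exceptional for this parametrization --- among them the singular point $[0:0:1]$ of Lemma \ref{quartic}, and, in the degenerate case $A=0$, the points of the line that then splits off $\mathcal{Q}_E$. For points $\hat{l}_\lambda$ with $\lambda\in K^*$, combine the previous paragraph with the remark following (\ref{Z3param}) --- a rational point of $\dualproj{2}\cong E^2/D_3$ lifts to rational points of $E^2/(\zmod{3})$ exactly when its discriminant is a square in $K$ --- to conclude that $\hat{l}_\lambda$ lifts. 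For the exceptional points one checks directly: $[0:0:1]$ corresponds to the line at infinity in $\mathbb{P}^2$, hence to the $D_3$-orbit $\{(O,O)\}$, which is visibly rational in $E^2/(\zmod{3})$; and for any other exceptional line one computes its discriminant from (\ref{pl}) and finds, again, that it is a square in $K$ precisely because $\sqrt{-3}\in K$.

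I do not expect a real obstacle here: the computational weight all sits in Lemma \ref{quarticdisc}, and what is left is the identity $-3888 = -36^2\cdot 3$ plus a little bookkeeping. The one place calling for care is the distinction between ``square in $K$'' and ``element of $(K^*)^2$'' at the degenerate parameter values ($g(\lambda)=0$, $\lambda\in\{0,\infty\}$, or $A=0$), handled by noting that $0$ is a square and that the associated lifted point is manifestly defined over $K$.
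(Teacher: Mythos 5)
Your proof is correct and takes essentially the same route as the paper: the paper's entire argument is the factorization $-3888 = -2^4\cdot 3^5$, so that by Lemma \ref{quarticdisc} $\Delta_{\lambda}$ is a square exactly when $-3$ is, which is your identity $-3888 = -36^2\cdot 3$. Your additional bookkeeping about degenerate parameter values and exceptional points of the parametrization is more careful than what the paper writes, but it does not change the substance of the argument.
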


\begin{proof}
To see this, simply factor $-3888 = -2^4\cdot 3^5$. Thus, by Lemma \ref{quarticdisc}, $\Delta_{\lambda}$ is a square in $K$ if and only if $-3$ is a square.
\end{proof}

\subsection{Rational points on $E^2/(\zmod{6})$ \label{RationalPointsSexticCase}}

The geometry of the quotient $E^2/D_6$ is closely related to that of $E^2/D_3$ via covering map $\psi$ of (\ref{D3D6cover}) in Section \ref{RationalPointsDihedralQuotients}. The focus in this section is the quotient map $\eta: E^2/(\zmod{6}) \rightarrow E^2/D_6$ associated with the action of the reflection $s_6 \in D_6$. The first objective is to establish a criterion for the rationality of lifts of rational points in $E^2/D_6$ under $\eta$. This will be achieved by considering the lifts under $\psi$, and I now introduce notation to describe these lifts. For
\[\alpha = [a' : b' : c'] \in \weightproj \cong E^2/D_6,\]
denote
\[\alpha_+ := [a' : \sqrt{b'} : c'], \alpha_- = [a' : -\sqrt{b'} : c'],\]
so that $\psi^{-1}(\alpha) = \{\alpha_+, \alpha_-\}$. Denote by $l_+$ and $l_-$ the lines in $\mathbb{P}^2$ corresponding to $\alpha_+$ and $\alpha_-$.

A rational point $\alpha \in \weightproj$ lifts to rational points on its double cover $E^2/(\zmod{6})$ if and only if the geometric action of $s_6$ on the lifts $\eta^{-1}(\alpha)$ is not induced by the action of $\absgal{K}$. As the action of $s_6$ on $E^2$ commutes with that of $r_6^3$, this is equivalent to requiring that the action of $s_6 = s_3$ on the lifts to the quotient $E^2/(\zmod{3})$ of the lifts $\psi^{-1}(\alpha)$ not be induced by the Galois action. As observed in the previous section, this latter condition holds if and only if the discriminant $\Delta_{l_+} = \Delta_{l_-}$ is a square in $K$.

Consider the image of the curve $\mathcal{Q}_E$ of (\ref{thequartic}) in $E^2/D_6$. The rational points on the image are the images of the points $[a:b:c]$ on $\mathcal{Q}_E$ which satisfy, up to the action of scalars, the conditions
\[a, c \in K, b^2 \in K.\]
When $b$ itself is rational, the geometric action of $r_6^3$ on the set $\{[a:b:c], [a:-b:c]\}$ can not be induced by $\absgal{K}$. The situation is different when $b$ is quadratic over $K$. Using the parametrization (\ref{quarticparam}), one has $\lambda^2 \in K$ if and only if $b^2 \in K$ and that $\lambda$ is quadratic over $K$ if and only if $b$ is quadratic over $K$.

\begin{prop}
\label{Z6rational}
If $K$ contains $\sqrt{-3}$, then $\Delta_{\lambda}$ is a square for every $\lambda = \pm\sqrt{\omega}$ where $\omega \in K^*\backslash\left(K^*\right)^2$. The lifts to $E^2/(\zmod{6})$ of the images in $E^2/D_6$ of such points $\hat{l}_{\lambda}$ are rational.
\end{prop}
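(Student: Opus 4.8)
The plan is to reduce the claim to the discriminant formula of Lemma~\ref{quarticdisc}, exactly as in the proof of Proposition~\ref{Z3rational}, and then to invoke the rationality criterion for the cover $\eta: E^2/(\zmod{6}) \rightarrow E^2/D_6$ recorded in the paragraph above. Throughout, fix $\omega \in K^*\backslash(K^*)^2$ and $\lambda$ with $\lambda^2 = \omega$, so that $\lambda$ is quadratic over $K$.

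First I would pin down the point of $\weightproj$ in question together with its $\psi$-preimages. By the parametrization (\ref{quarticparam}) and the formula (\ref{D3D6cover}) for $\psi$,
\[\psi(\hat{l}_{\lambda}) = [\,6\lambda^2 : (6\lambda^3)^2 : 3A\lambda^4 - 1\,] = [\,6\omega : 36\omega^3 : 3A\omega^2 - 1\,],\]
and every coordinate on the right lies in $K$; call this rational point $\alpha$. Since $(\pm 6\lambda^3)^2 = 36\omega^3$ and $\lambda^4 = \omega^2$, the two $\psi$-preimages of $\alpha$ are $\alpha_+ = \hat{l}_{\lambda}$ and $\alpha_- = \hat{l}_{-\lambda}$, so the lines $l_+, l_-$ appearing in the criterion are $l_{\lambda}$ and $l_{-\lambda}$. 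In the notation of Lemma~\ref{quarticdisc} this means $\Delta_{l_+} = \Delta_{\lambda}$ and $\Delta_{l_-} = \Delta_{-\lambda}$; and because the formula of Lemma~\ref{quarticdisc} involves only even powers of $\lambda$, one has $\Delta_{-\lambda} = \Delta_{\lambda}$, which both confirms the equality $\Delta_{l_+} = \Delta_{l_-}$ asserted there and shows that this common value lies in $K$.

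Next I would check that this common discriminant is a square in $K$. As $\lambda^2 = \omega \in K^*$, each of $\lambda^4 = \omega^2$, $\lambda^6 = \omega^3$, $\lambda^8 = \omega^4$ lies in $K$, so $m := 27A^2\lambda^8 - 108B\lambda^6 - 18A\lambda^4 - 1$ is an element of $K$, and Lemma~\ref{quarticdisc} gives $\Delta_{\lambda} = -3888\,m^2$. Writing $-3888 = -2^4\cdot 3^5 = -3\cdot 36^2$, we get $\Delta_{\lambda} = -3\,(36m)^2$, which is a square in $K$ because $\sqrt{-3} \in K$ by hypothesis. This is the same computation as in Proposition~\ref{Z3rational}; the only new feature is that it still applies when $\lambda \notin K$, since $\lambda$ enters the formula only through $\lambda^2$. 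Feeding $\Delta_{l_+} = \Delta_{l_-} \in (K^*)^2$ into the rationality criterion for $\eta$ then shows that $\alpha = \psi(\hat{l}_{\lambda})$ lifts to rational points on $E^2/(\zmod{6})$, which is the assertion of the proposition.

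I do not expect a substantive obstacle: the mathematical content is the factorization $-3888 = -3\cdot 36^2$ already used in Proposition~\ref{Z3rational}, and everything else is bookkeeping. The one point deserving a little care is the verification that $\psi(\hat{l}_{\lambda})$ is genuinely $K$-rational and that both of its $\psi$-preimages carry the \emph{same} discriminant $\Delta_{\lambda} \in K$ even though $\lambda$ itself is only quadratic over $K$ — but this is immediate from the explicit forms of (\ref{quarticparam}) and Lemma~\ref{quarticdisc}, both of which depend on $\lambda$ only through $\lambda^2$.
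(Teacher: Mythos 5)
Your proposal is correct and follows essentially the same route as the paper: substitute $\lambda^2=\omega$ into the discriminant formula of Lemma~\ref{quarticdisc} and use the factorization $-3888=\left(2^2\cdot 3^2\cdot\sqrt{-3}\right)^2$ from Proposition~\ref{Z3rational}. The extra bookkeeping you include (the $K$-rationality of $\psi(\hat{l}_{\lambda})$ and the identification of the two $\psi$-preimages with $\hat{l}_{\pm\lambda}$) is left implicit in the paper but is consistent with its argument.
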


\begin{proof}
From (\ref{quarticdisc}), if $\lambda = \pm\sqrt{\omega}$, then
\[
\Delta_{\lambda} = -3888(27A^2\omega^4 - 108B\omega^3 - 18A\omega^2 - 1)^2.
\]
Observe that $-3888 = \left(2^2\cdot 3^2\cdot \sqrt{-3}\right)^2$ as in Proposition \ref{Z3rational}.
\end{proof}

\subsection{Rational points on $E^2/(\zmod{4})$ \label{RationalPointsQuarticCase}}

Denote by $[U:V:W]$ the homogeneous coordinates on $E^2/D_4 \equiv \mathbb{P}^2$ and write $u := U/W$ and $v := V/W$. The surface $E^2/(\zmod{4})$ can be coordinatized by $U, V,$ and $W$ along with a function on $E^2$ which is $\zmod{4}$-invariant but not $D_4$-invariant under the action (\ref{D4act}).
\begin{lemma}
\label{cycinv}
The function $\omega$ defined for $(P,Q) \in E^2$ with $P \neq O \neq Q$ by
\[\omega(P,Q) := y(P)y(Q)(x(P) - x(Q))\]
is $\zmod{4}$-invariant but is not $D_4$-invariant.
\end{lemma}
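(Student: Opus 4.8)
The plan is a direct computation using only the two elementary identities $x(-R) = x(R)$ and $y(-R) = -y(R)$ valid for any point $R$ on $E$ in the Weierstrass coordinates of (\ref{weierstrass}). Before doing so I would remark that, although $\omega$ is written down only on the open locus $P \neq O \neq Q$, it extends to a rational function on all of $E^2$, so that the assertions ``$\zmod{4}$-invariant'' and ``not $D_4$-invariant'' are meaningful statements about that rational function; the identities verified on the open locus then hold identically.

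\emph{$\zmod{4}$-invariance.} The copy of $\zmod{4}$ sitting inside $D_4$ is $\langle r \rangle$, so it suffices to check that $\omega$ is fixed by the generator $r$ of (\ref{D4act}). Applying that action, $\omega(r\cdot(P,Q)) = \omega(-Q,P) = y(-Q)\,y(P)\,(x(-Q) - x(P))$, and the two identities above turn this into $-y(Q)\,y(P)\,(x(Q) - x(P)) = y(P)\,y(Q)\,(x(P) - x(Q)) = \omega(P,Q)$. Since $r$ generates $\zmod{4}$, this is the whole claim on that side.

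\emph{Failure of $D_4$-invariance.} Here I would simply exhibit a group element that moves $\omega$, namely the reflection $s$ of (\ref{D4act}): $\omega(s\cdot(P,Q)) = \omega(P,-Q) = y(P)\,y(-Q)\,(x(P) - x(-Q)) = -y(P)\,y(Q)\,(x(P) - x(Q)) = -\omega(P,Q)$. Because $\omega$ is plainly not the zero function on $E^2$ --- it is nonzero at any $(P,Q)$ with $x(P) \neq x(Q)$ and $y(P), y(Q) \neq 0$ --- we get $\omega(s\cdot(P,Q)) \neq \omega(P,Q)$, so $\omega$ is not $D_4$-invariant.

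I do not expect any genuine obstacle: the proof is two one-line sign computations. The only point requiring a word of care is the bookkeeping mentioned at the outset, namely confirming that the formula for $\omega$ on the locus $P \neq O \neq Q$ descends from (equivalently, is the restriction of) a bona fide rational function on the surface $E^2$, so that comparing $\omega$ with $\omega \circ g$ for $g \in D_4$ is a comparison of functions on $E^2$ rather than merely on an open subset.
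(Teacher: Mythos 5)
Your proof is correct and follows essentially the same route as the paper's: verify invariance under the generator $r$ and exhibit the sign flip under $s$, using that $x$ is even and $y$ is odd. The only additions are the (harmless and reasonable) remarks that $\omega$ extends to a rational function on $E^2$ and that $-\omega \neq \omega$ because $\omega$ is not identically zero.
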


\begin{proof}
By (\ref{D4act}),
\[\omega(r\cdot (P,Q)) = \omega(-Q, P) = y(-Q)y(P)(x(-Q) - x(P)).\]
The function $x$ on $E$ is even whereas $y$ is odd, so
\[\omega(r\cdot (P,Q)) = \omega(P,Q).\]
Hence $\omega$ is $\zmod{4}$-invariant. Similarly,
\[\omega(s\cdot (P,Q)) = \omega(P,-Q) = -\omega(P,Q).\]
Hence $\omega$ is not $D_4$-invariant.
\end{proof}

Note that $\omega$ extends to the whole of $E^2$ by
\[\omega(P,Q) = Y(P)Y(Q)(X(P)Z(Q) - X(Q)Z(P)).\]
The double cover $E^2/(\zmod{4})\ \rightarrow E^2/D_4$ is ramified over the $D_4$-orbits of points $(P,Q)$ such that $P$ or $Q$ is a 2-torsion point on $E$ or $P = \pm Q$. The locus of points satisfying the first condition consists of four lines in $E^2/D_4 \equiv \mathbb{P}^2$ and the locus of points satisfying the second condition is a conic. The functions $y(P)^2$ and $y(Q)^2$ can be expressed in terms of the functions $x(P) + x(Q)$ and $x(P)x(Q)$ through the Weierstrass equation (\ref{weierstrass}). In $uv$-coordinates,
\[y(P)^2y(Q)^2 = \psi(u,v),\]
where
\[\psi(u,v) := Bu^3 + v^3 + Au^2v - 3Buv - 2Av^2 + ABu + A^2v + B^2.\]
Similarly,
\[(x(P) - x(Q))^2 = u^2 - 4v.\]
Thus, for $(P,Q) \in \phi^{-1}([u:v:1])$,
\begin{equation}
\label{omegasquared}
\omega(P,Q)^2 = \psi(u,v)(u^2 - 4v).
\end{equation}
In particular, in the $uv$-plane,
\[\mathcal{R} : \psi(u,v)(u^2 - 4v) = 0.\]

Writing $P_1, P_2, P_3$ for the non-trivial 2-torsion points of $E$, the singular points on the ramification locus are the points
\[\phi(O,O), \phi(O, P_i), \phi(P_i, P_j) \in E^2,\]
for $1 \leq i,j \leq 3$ and $\phi$ as in (\ref{D4quotientmap}). Generically, these points form four Galois orbits -- the point $\phi(O,O)$, the points $\phi(O, P_i)$, the points $\phi(P_i, P_i)$, and the points $\phi(P_i, P_j)$ for $i \neq j$. By the same considerations involving Hurwitz's formula that I discussed before Lemma \ref{quartic}, there is a cubic curve in $E^2/D_4 \equiv \mathbb{P}^2$ the lift of which to $E^2/(\zmod{4})$ has genus 0.
\begin{lemma}
\label{cubic}
For an elliptic curve $E/K$, the curve
\begin{equation}
\label{thecubic}
\mathcal{C}_E : U^3 - 3UVW + AUW^2 + 2BW^3 = 0
\end{equation}
is the unique cubic curve with a double point at $\phi(O,O)$ passing through each of the points $\phi(P,Q)$ where $P$ and $Q$ are non-trivial 2-torsion points on $E$.
\end{lemma}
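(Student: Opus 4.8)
\emph{Proof proposal.} The plan is to closely follow the proof of Lemma~\ref{quartic}: uniqueness has, as there, already been accounted for in the paragraph preceding the statement — having a double point at $\phi(O,O)$ together with passing through the six images $\phi(P,Q)$ (with $P,Q$ running over the non-trivial $2$-torsion, the case $P=Q$ included) imposes $3+6=9$ linear conditions on the $9$-dimensional space of plane cubics, and by the Hurwitz-formula considerations recalled before Lemma~\ref{quartic} the space of cubics meeting these conditions is $0$-dimensional — so it remains only to check that the explicit cubic $\mathcal{C}_E$ of (\ref{thecubic}) satisfies them.

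First I would pin down $\phi(O,O)$. The formula (\ref{D4quotientmap}) is valid only for $P \neq O \neq Q$, but $\phi$ extends to a morphism on all of $E^2$; rescaling (\ref{D4quotientmap}) by $x(P)x(Q)$ and letting $P,Q\to O$ (equivalently, using the description of $E^2/D_4 \cong \mathbb{P}^2$ as the symmetric square of $E/(\zmod{2})\cong\mathbb{P}^1$, in which $O$ maps to the point at infinity) gives $\phi(O,O) = [0:1:0]$. One then checks that $\mathcal{C}_E$ and its three partial derivatives $3U^2 - 3VW + AW^2$, $-3UW$, and $-3UV + 2AUW + 6BW^2$ all vanish at $[0:1:0]$, so this is at least a double point; and in the affine chart $V=1$ with local coordinates $(U,W)$ the leading part of $\mathcal{C}_E$ is $-3UW$, a product of two distinct linear forms, so $[0:1:0]$ is an ordinary node of $\mathcal{C}_E$, hence exactly a double point.

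It remains to verify that $\mathcal{C}_E$ passes through $\phi(P_i,P_j)$ for all non-trivial $2$-torsion $P_i,P_j$. Writing $e_1,e_2,e_3$ for the roots of $x^3+Ax+B$, so that $P_i=(e_i,0)$ and $\phi(P_i,P_j)=[e_i+e_j : e_ie_j : 1]$, substitution into (\ref{thecubic}) together with the identity $(e_i+e_j)^3 - 3(e_i+e_j)e_ie_j = e_i^3 + e_j^3$ yields $e_i^3 + e_j^3 + A(e_i+e_j) + 2B$; since each $e_k$ satisfies $e_k^3 = -Ae_k - B$, this vanishes. The computation is uniform in $i$ and $j$, so in particular it covers the diagonal case $P_i=P_j$, and $\mathcal{C}_E$ meets all six points; by uniqueness it is the cubic of the lemma.

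I do not expect a genuine obstacle here: as with Lemma~\ref{quartic}, this is a verification rather than an argument. The steps that need a little care are the extension of $\phi$ over $(O,O)$ and the resulting identification $\phi(O,O)=[0:1:0]$ — without which the node condition cannot even be phrased — and, if one wants a fully self-contained uniqueness statement, checking that the nine linear conditions above are independent; but the latter is precisely the content of the Hurwitz/genericity discussion preceding Lemma~\ref{quartic}, which I would simply cite.
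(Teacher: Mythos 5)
Your proposal is correct and follows essentially the same route as the paper: verify the incidence conditions directly and defer uniqueness to the dimension count sketched before Lemma~\ref{quartic}. The only differences are cosmetic — you treat all six points $\phi(P_i,P_j)$ uniformly via the identity $(e_i+e_j)^3-3(e_i+e_j)e_ie_j=e_i^3+e_j^3$, whereas the paper splits into the diagonal and off-diagonal cases (and the subcase $\alpha_k=0$), and you spell out the node verification at $[0:1:0]$ that the paper merely asserts.
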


\begin{proof}
I show that $\mathcal{C}_E$ exhibits the stated incidence properties. The uniqueness then follows from a dimension counting argument similar to that of Lemma \ref{quartic}, which I omit.

Extending (\ref{D4quotientmap}) to all of $E^2$ gives $\phi(O,O) = [0:1:0]$, which is a double point of $\mathcal{C}_E$.

By (\ref{D4quotientmap}), for $1 \leq i \leq 3$,
\[\phi(P_i, P_i) = [2\alpha_i : \alpha_i^2 : 1].\]
Substituting into (\ref{thecubic}) yields
\begin{equation*}
\begin{array}{ccl}
\left(2\alpha_i\right)^3 - 3\left(2\alpha_i\right)\left(\alpha_i^2\right) + A\left(2\alpha_i\right) + 2B & = & 8\alpha_i^3 - 6\alpha_i^3 + 2A\alpha_i + 2B\\
 & = & 2\left(\alpha_i^3 + A\alpha_i + B\right)\\
 & = & 0
\end{array},
\end{equation*}
proving $\mathcal{C}_E$ is incident to $\phi(P_i, P_i)$.

For $1 \leq i, j \leq 3$ with $i \neq j$, choose $k$ so that $\{i,j,k\} = \{1, 2, 3\}$. If $\alpha_k = 0$, then $\phi(P_i, P_j) = [0 : A : 1]$, which is clearly incident to $\mathcal{C}_E$. Otherwise,
\[\phi(P_i, P_j) = \left[-\alpha_k : -\frac{B}{\alpha_k} : 1\right].\]
Substituting into (\ref{thecubic}) gives
\begin{equation*}
\begin{array}{ccl}
\left(-\alpha_k\right)^3 - 3\left(-\alpha_k\right)\left(-\frac{B}{\alpha_k}\right) + A\left(-\alpha_k\right) + 2B & = & -\alpha_k^3 - 3B - A\alpha_k + 2B\\
 & = & -\left(\alpha_k^3 + A\alpha_k + B\right)\\
 & = & 0
\end{array}.
\end{equation*}
Therefore $\mathcal{C}_E$ is incident to each point $\phi(P_i, P_j)$.
\end{proof}

The lines in the $uv$-plane passing incident to the point $\phi(O,O) = [0:1:0]$ at infinity are parametrized by $u = \lambda$. In this plane, (\ref{thecubic}) becomes
\[\mathcal{C}_E : u^3 - 3uv + Au +2B = 0.\]
Thus the points on $\mathcal{C}_E$ in the $uv$-plane are
\begin{equation}
\label{cubicparam}
(u(\lambda), v(\lambda)) = [u(\lambda) : v(\lambda) : 1] = \left[\lambda : \frac{\lambda^3 + A\lambda + 2B}{3\lambda} : 1\right],
\end{equation}
with $\lambda = 0$ corresponding to the point $[0:1:0]$ and infinity.

Write
\[\Omega_{\lambda} := \psi(u(\lambda),v(\lambda))(u(\lambda)^2 - 4v(\lambda)).\]
Explicitly,
\begin{equation}
\label{cubicOmega}
\Omega_{\lambda} = -\frac{\left(\lambda^3 + A\lambda - B\right)^2\left(\lambda^3 + 4A\lambda + 8B\right)^2}{81\lambda^4}.
\end{equation}
By (\ref{omegasquared}), the point $(u(\lambda), v(\lambda))$ lifts to rational points in $E^2/(\zmod{4})$ if and only if $\Omega_{\lambda} \in \left(K^*\right)^2$.
\begin{prop}
\label{Z4rational}
If $K$ contains $i$, then $\Omega_{\lambda}$ is a square in $K$ for every $\lambda \in K^*$. In this case, every rational point on $\mathcal{C}_E$ lifts to rational points on $E^2/(\zmod{4})$.
\end{prop}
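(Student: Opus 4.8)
The plan is to read the result straight off the explicit formula (\ref{cubicOmega}), combined with the lifting criterion recorded immediately above it: the point $(u(\lambda),v(\lambda))$ lifts to a rational point of $E^2/(\zmod{4})$ precisely when $\Omega_\lambda \in (K^*)^2$. So essentially all I would do is inspect the right-hand side of (\ref{cubicOmega}) and observe that, once $i \in K$, each of its constituent pieces is a square.

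In detail: set $N(\lambda) := (\lambda^3 + A\lambda - B)(\lambda^3 + 4A\lambda + 8B)$, so that (\ref{cubicOmega}) reads $\Omega_\lambda = -N(\lambda)^2/(81\lambda^4)$. First I would note that, since $\lambda \in K^*$, the denominator is the nonzero square $(9\lambda^2)^2$, while $N(\lambda)^2$ is manifestly a square; hence $\Omega_\lambda = -\bigl(N(\lambda)/(9\lambda^2)\bigr)^2$, and the only conceivable obstruction to its being a square in $K$ is the leading sign $-1$. When $i = \sqrt{-1} \in K$ this obstruction disappears, since $-1 = i^2$ gives
\[\Omega_\lambda = \left(\frac{i\, N(\lambda)}{9\lambda^2}\right)^2 \in (K^*)^2 \qquad \text{whenever } N(\lambda) \neq 0,\]
and then (\ref{omegasquared}) furnishes the rational lift. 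For the finitely many $\lambda \in K^*$ with $N(\lambda) = 0$ one has $\Omega_\lambda = 0$: these points lie on the ramification locus $\mathcal{R}$, where $\omega(P,Q)^2 = 0$ forces $\omega(P,Q) = 0 \in K$, so they lift as well. Together with $\lambda = 0$, which corresponds to the base point $[0:1:0]$, this covers every rational point of $\mathcal{C}_E$.

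I do not expect a genuine obstacle here: the whole content is the elementary observation that, after cancelling the perfect-square denominator $81\lambda^4$ and the perfect-square numerator $N(\lambda)^2$ in (\ref{cubicOmega}), what is left is exactly $-1$, so the condition for lifting reduces to $-1$ being a square in $K$ — exactly parallel to Propositions \ref{Z3rational} and \ref{Z6rational}, where the residual constant was $-3$. The only minor care needed is the bookkeeping around the vanishing of $N(\lambda)$ described above. (Were one to want a fully self-contained argument one would additionally have to establish (\ref{cubicOmega}) itself by substituting the parametrization (\ref{cubicparam}) into $\psi(u,v)(u^2 - 4v)$ and simplifying, but this is a routine computation of the same flavour as Lemma \ref{quarticdisc}, so I would take it as given.)
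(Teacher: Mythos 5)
Your proposal is correct and is essentially the paper's own argument: the paper's proof likewise reads off from (\ref{cubicOmega}) that $\Omega_{\lambda} \in -1\cdot (K^*)^2$ when non-zero, so that $i \in K$ makes it a square. Your extra bookkeeping for the vanishing of $N(\lambda)$ and for $\lambda = 0$ is a harmless (and slightly more careful) elaboration of the same one-line observation.
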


\begin{proof}
This follows from (\ref{cubicOmega}), which shows that $\Omega_{\lambda} \in -1\cdot (K^*)^2$ when it is non-zero.
\end{proof}

\section{Irreducible points}

In this section, I show that infinitely many of the rational points produced in the previous section are irreducible. The argument closely follows that used in the irreducibility step of the proof of Proposition \ref{quadratic}. The primary difference is that I will make use of the more general Weil bounds in counting the number of irreducible points on the reductions of $E$ over large primes. The field $K$ will henceforth be as in the statement of Theorem \ref{main}.

Suppose that $\phi(P,Q) = [u:v:q] \in \mathbb{P}^2 \equiv E^2/D_4$. From (\ref{D4quotientmap}), the Weierstrass coordinates $x(P)$ and $x(Q)$ satisfy the equation
\[q_{u,v}(x) := x^2 - ux + v.\]
Thus the Galois action on the $\zmod{4}$-orbit of $(P,Q)$ is transitive if and only if $q_{u,v}(x)$ is irreducible.

Write
\[q_{\lambda}(x) := 3\lambda x^2 - 3\lambda^2 x + (\lambda^3 + A\lambda + 2B),\]
which has discriminant
\[D_{\lambda} = 6\lambda^4 - 12\lambda^3 -12A\lambda^2 -24B\lambda.\]
Let $P$ be a prime of $\ints{K}$ of large norm. By (\ref{cubicparam}), $q_{u(\lambda),v(\lambda)}(x)$ factors over the residue field $k_P$ if and only if $D_{\lambda}$ is a quadratic residue in $k_P$. The proportion of parameters $\lambda \in K$ for which $D_{\lambda}$ is a quadratic residue in $k_P$ may be estimated by counting the $k_P$-points on the curve
\[C : \mu^2 - D_{\lambda} = 0.\]
As $D_{\lambda}$ is not a square in $k_P[\lambda]$, $C$ is irreducible of genus 1. By the Weil bound for $C$, there are roughly $|k_P|/2$ values of $\lambda$ in $k_P$ for which $D_{\lambda}$ is a quadratic residue in $k_P$. Therefore, for roughly half of the parameters $\lambda$, $q_{u(\lambda), v(\lambda)}(x)$ is irreducible over $k_P$. Therefore, given distinct large primes $P_1, \ldots, P_n$ and $Q_1, \ldots, Q_m$ in $\ints{K}$, one may find using the Chinese remainder theorem parameters $\lambda$ for which $q_{u(\lambda), v(\lambda)}(x)$ is irreducible over the residue fields $k_{P_i}$ but factors over each field $k_{Q_j}$. Thus the ramification behaviour of the extension of $K$ generated by $q_{u(\lambda), v(\lambda)}(x)$ over larger and larger lists of large primes can be arbitrarily specified, proving that there are infinitely many linearly disjoint $\zmod{4}$-extensions $L$ of $K$ with points $P$ on $E$ defined only over $L$.

The nature of the action (\ref{D4act}) is such that any point $P$ defined only over an extension $L$ of $K$ as above necessarily satisfies the hypothesis of Lemma \ref{mereltrick}. Therefore, by Merel's theorem, there are infinitely many $\zmod{4}$-extensions $L$ of $K$ over which $E$ gains rank.

For a point $(P,Q) \in E^2$, let $l$ denote the line joining $P$ and $Q$ in $\mathbb{P}^2$. By (\ref{D3act}), the Galois action on the $\zmod{3}$-orbit of $(P,Q)$ is transitive if and only if the polynomial $p_l(x)$ defined in (\ref{pl}) is irreducible.

For the lines $l_{\lambda}$ given by (\ref{quarticparam}),
\[p_{\lambda}(x) = x^3 - \frac{1}{36\lambda^6}\left(6\lambda^2x + 3A\lambda^4 - 1\right)^2 + Ax + B.\]
Let $P$ be a large prime of $\ints{K}$. Clearing denominators and applying the cubic formula, one sees that $p_{\lambda}(x)$ is irreducible if and only if
\[D_{\lambda}' := \frac{27A^2\lambda^8 - 108B\lambda^6 - 18A\lambda^4 - 1}{4} \textrm{ is a cube in } k_P.\]
As before, the proportion of $\lambda \in K$ for which this condition holds may be estimated by counting $k_P$-points on the curve
\[C' : \mu^3 - D_{\lambda}' = 0.\]
A singular point on $C'$ must satisfy $\mu = 0$, and thus the corresponding value of $\lambda$ must be a multiple root of $D_{\lambda}'$. A multiple root of $D_{\lambda}'$ corresponds to a line in $\mathbb{P}^2$ which intersects $E$ at a single point with multiplicity greater than 3, which is impossible since $E$ is smooth. Therefore $C'$ itself is a smooth curve. As $K$ contains its cube roots of unity, there are by the Weil bound for $C'$ roughly $|k_P|/3$ values of $\lambda$ in $k_P$ for which $D_{\lambda}'$ is a cube in $k_P$. This allows us to apply the same argument as in the previous case to deduce that there are infinitely many $\zmod{3}$-extensions of $K$ over which $E$ gains rank.

Substituting $\omega = \lambda^2$ in $D_{\lambda}'$ yields the expression
\[D_{\omega}' := \frac{27A^2\omega^4 - 108B\omega^3 - 18A\omega^2 - 1}{4}.\]
As with $C'$, there are roughly $|k_P|$ $k_P$-points on the curve
\[C'' : \mu^3 - D_{\omega}' = 0.\]
Half of these points -- the ones for which $\omega$ is a square in $k_P$ -- correspond to the $k_P$-points on $C'$. Therefore, there are roughly $5|k_P|/6$ values of $\omega$ which are not quadratic residues in $k_P$ such that the polynomials $p_{\sqrt{\omega}}(x)$ are irreducible in $k_P[x]$. The lifts to $E^2/(\zmod{6})$ of the images of the corresponding points $\hat{l}_{\pm\sqrt{\omega}}$ in $\weightproj \equiv E^2/D_6$ are irreducible. This is because the irreducibility of $p_{\sqrt{\omega}}(x)$ implies that the action of $r_6^2 = r_3$ on the corresponding $\zmod{6}$-orbits is induced by the action of $\absgal{K}$. Finally, the Chinese remainder theorem argument allows us to deduce that there are infinitely many $\zmod{6}$-extensions of $K$ over which $E$ gains rank.

This concludes my proof of Theorem \ref{main}.

\bibliography{gr}{}
\bibliographystyle{alpha}{}

\end{document}